\documentclass[12pt]{article}
\usepackage{amsthm,amsfonts,amssymb,epsfig,graphics,amsmath,amsbsy,color,enumerate}

%\makeatletter
%\newcommand{\pushright}[1]{\ifmeasuring@#1\else\omit\hfill$\displaystyle#1$\fi\ignorespaces}
%\newcommand{\pushleft}[1]{\ifmeasuring@#1\else\omit$\displaystyle#1$\hfill\fi\ignorespaces}
%\makeatother

%\usepackage{showkeys}
%%%%%%%%%%%%%%%%%%%%%%%%%%%%%%%%%%
%HERE you TURN ON/OFF the tags for eqs., refs., etc.%
%\usepackage{showkeys}
%%%%%%%%%%%%%%%%%%%%%%%%%%%%%%%%%%%

\theoremstyle{plain}
\newtheorem{theorem}{Theorem}
\newtheorem{lemma}{Lemma}

\newtheorem{remark}{Remark}
\newtheorem{definition}{Definition}
\newtheorem{corollary}{Corollary}
\newtheorem{claim}{Claim}

\newcommand{\mas}{\operatorname{Mas}}
\newcommand{\mor}{\operatorname{Mor}}

\newcommand{\loc}{\operatorname{loc}}
\newcommand{\diag}{\operatorname{diag}}
\newcommand{\ess}{\operatorname{ess}}
\newcommand{\sgn}{\operatorname{sgn}}
\newcommand{\sech}{\operatorname{sech}}
\newcommand{\discrete}{\operatorname{discrete}}

%Last Changed: March 2, 2019

%Running headers
%\pagestyle{fancy}
%\lhead[]{}
%\chead{P. Howard}
%\rhead[]{}

% Margins etc. 
 
\setlength{\textwidth}{6.5in}
\setlength{\topmargin}{0pt}
\setlength{\voffset}{-.65in}
\setlength{\evensidemargin}{0pt}
\setlength{\oddsidemargin}{0pt}
\setlength{\hoffset}{0pt}
\setlength{\textheight}{650pt}

%Width of line below running headers
%\setlength{\headrulewidth}{0pt}

\numberwithin{equation}{section}
\numberwithin{lemma}{section}
\numberwithin{theorem}{section}
\numberwithin{remark}{section}
\numberwithin{claim}{section}
\numberwithin{corollary}{section}
\numberwithin{proposition}{section}
\numberwithin{definition}{section}
\numberwithin{condition}{section}

\title{The Maslov and Morse Indices for Sturm-Liouville 
Systems on the Half-Line}

\author{Peter Howard and Alim Sukhtayev}

\begin{document}

\maketitle

\begin{abstract} 
We show that for Sturm-Liouville Systems on the half-line 
$[0,\infty)$, the Morse index can be expressed in terms 
of the Maslov index and an additional term 
associated with the boundary conditions at $x = 0$. 
Relations are given both for the case in which the target
Lagrangian subspace is associated with the space of 
$L^2 ((0,\infty), \mathbb{C}^{n})$ 
solutions to the Sturm-Liouville System, and the case when the target 
Lagrangian subspace is associated with the space of 
solutions satisfying the boundary conditions at $x = 0$. 
In the former case, 
a formula of H\"ormander's is used to show that the 
target space can be replaced with the Dirichlet space, 
along with additional explicit terms. We illustrate our 
theory by applying it to an eigenvalue problem that
arises when the nonlinear Schr\"odinger equation on 
a star graph is linearized about a half-soliton 
solution. 
\end{abstract}

\section{Introduction}\label{introduction}

We consider Sturm-Liouville systems
\begin{equation} \label{sturm}
- (P(x) \phi')' + V(x) \phi = \lambda Q(x) \phi, \quad x \in (0,\infty),
\end{equation}
with the one-sided self-adjoint boundary conditions 
\begin{equation} \label{sturm-bc}
\alpha_1 \phi(0) + \alpha_2 P(0) \phi'(0) = 0.
\end{equation}
Here, $\phi = \phi(x; \lambda) \in \mathbb{C}^n$, and we assume:

\medskip
\noindent
{\bf (A1)} The matrices $P (x)$, $V(x)$, and $Q(x)$ are defined
and self-adjoint for a.e. $x \in [0, \infty)$, with also 
$P \in AC_{\loc} ([0,\infty), \mathbb{C}^{n \times n})$ and 
$V(\cdot), Q(\cdot) \in L_{\loc}^1 ([0,\infty), \mathbb{C}^{n \times n})$.
Moreover, there exist constants 
$\theta_P, \theta_Q > 0$ and $C_V \ge 0$ so that 
\begin{equation*}
(P(x)v, v) \ge \theta_P |v|^2;
\quad (Q(x) v,v) \ge \theta_Q |v|^2;
\quad |(V(x) v, v)| \le C_V |v|^2
\end{equation*} 
for a.e. $x \in (0, \infty)$. Here, $(\cdot, \cdot)$ denotes the standard
inner product on $\mathbb{C}^n$, and $|\cdot|$ denotes the standard norm
on the same space. We emphasize that $x = 0$ is included in our local 
designations, so the boundary condition at $x = 0$ is regular. 

\medskip
\noindent
{\bf (A2)} We assume that $P$, $V$, and $Q$ all approach well-defined 
asymptotic endstates at exponential rate. That is, we assume there exist
self-adjoint matrices $P_+, V_+, Q_+ \in \mathbb{C}^{n \times n}$,
with $P_+, Q_+$ positive definite, 
and constants $C$ and $\eta > 0$ so that 
\begin{equation*}
|P(x) - P_+| \le C e^{- \eta |x|},
\quad \text{a.e. } x \in [0, \infty),
\end{equation*}
and similarly for $V(x)$ and $Q(x)$. In addition, we assume 
$|P'(x)| \le C e^{-\eta |x|}$ for a.e. $x \in (0, \infty)$.

\medskip
\noindent
{\bf (A3)} For the boundary conditions, we take 
$\alpha_1, \alpha_2 \in \mathbb{C}^{n \times n}$, and 
for notational convenience, set 
$\alpha = (\alpha_1 \,\, \alpha_2)$. 
We assume $\operatorname{rank} \alpha = n$,
$\alpha J \alpha^* = 0$, 
which is equivalent to self-adjointness in this case. Here, 
$J$ denotes the standard symplectic matrix
\begin{equation*}
J = 
\begin{pmatrix}
0 & - I_n \\
I_n & 0
\end{pmatrix},
\end{equation*}
with $I_n$ denoting the usual $n \times n$ identity matrix. 

We can think of (\ref{sturm}) in terms of the operator
\begin{equation*}
\mathcal{L} \phi = Q(x)^{-1} \{- (P(x) \phi')' + V(x) \phi \},
\end{equation*}
with which we associate the domain
\begin{equation*}
\begin{aligned}
\mathcal{D} (\mathcal{L}) 
&= \{\phi \in L^2 ((0,\infty), \mathbb{C}^n): \phi, \phi' \in AC_{\loc} ([0,\infty), \mathbb{C}^n), \\
& \mathcal{L} \phi \in L^2 ((0,\infty), \mathbb{C}^n), \, \alpha_1 \phi(0) + \alpha_2 P(0) \phi'(0) = 0\},
\end{aligned}
\end{equation*}
and the inner product 
\begin{equation*}
\langle \phi, \psi \rangle_{Q} := \int_0^1 (Q(x) \phi (x), \psi (x))_{\mathbb{C}^n} dx.
\end{equation*}
With this choice of domain and inner product, $\mathcal{L}$ is 
densely defined, closed, and self-adjoint, 
so $\sigma (\mathcal{L}) \subset \mathbb{R}$. 

Our particular interest lies in counting the number of negative
eigenvalues of $\mathcal{L}$ (i.e., the Morse index). We proceed by relating 
the Morse index to the Maslov index, which is described in 
Section \ref{maslov-section}. We find that the 
Morse index can be computed in terms of the Maslov index and
an additional term associated with the boundary condition at $x = 0$.

As a starting point, we define what we will mean by a {\it Lagrangian
subspace} of $\mathbb{C}^{2n}$. For comments about working in 
$\mathbb{C}^{2n}$ rather than $\mathbb{R}^{2n}$, the reader is 
referred to Remark 1.1 of \cite{HS2}, and the references mentioned
in that remark.

\begin{definition} \label{lagrangian_subspace}
We say $\ell \subset \mathbb{C}^{2n}$ is a Lagrangian subspace of $\mathbb{C}^{2n}$
if $\ell$ has dimension $n$ and
\begin{equation} 
(J u, v)_{\mathbb{C}^{2n}} = 0, 
\end{equation} 
for all $u, v \in \ell$. Here, $(\cdot, \cdot)_{\mathbb{C}^{2n}}$ denotes
the standard inner product on $\mathbb{C}^{2n}$. In addition, we denote by 
$\Lambda (n)$ the collection of all Lagrangian subspaces of $\mathbb{C}^{2n}$, 
and we will refer to this as the {\it Lagrangian Grassmannian}. 
\end{definition}

Any Lagrangian subspace of $\mathbb{C}^{2n}$ can be
spanned by a choice of $n$ linearly independent vectors in 
$\mathbb{C}^{2n}$. We will generally find it convenient to collect
these $n$ vectors as the columns of a $2n \times n$ matrix $\mathbf{X}$, 
which we will refer to as a {\it frame} for $\ell$. Moreover, we will 
often coordinatize our frames as $\mathbf{X} = {X \choose Y}$, where $X$ and $Y$ are 
$n \times n$ matrices. Following \cite{F} (p. 274), we specify 
a metric on $\Lambda (n)$ in terms of appropriate orthogonal projections. 
Precisely, let $\mathcal{P}_i$ 
denote the orthogonal projection matrix onto $\ell_i \in \Lambda (n)$
for $i = 1,2$. I.e., if $\mathbf{X}_i$ denotes a frame for $\ell_i$,
then $\mathcal{P}_i = \mathbf{X}_i (\mathbf{X}_i^* \mathbf{X}_i)^{-1} \mathbf{X}_i^*$.
We take our metric $d$ on $\Lambda (n)$ to be defined 
by 
\begin{equation*}
d (\ell_1, \ell_2) := \|\mathcal{P}_1 - \mathcal{P}_2 \|,
\end{equation*} 
where $\| \cdot \|$ can denote any matrix norm. We will say 
that a path of Lagrangian subspaces 
$\ell: \mathcal{I} \to \Lambda (n)$ is continuous provided it is 
continuous under the metric $d$. 

Suppose $\ell_1 (\cdot), \ell_2 (\cdot)$ denote continuous paths of Lagrangian 
subspaces $\ell_i: \mathcal{I} \to \Lambda (n)$, for some parameter interval 
$\mathcal{I}$. The Maslov index associated with these paths, which we will 
denote $\mas (\ell_1, \ell_2; \mathcal{I})$, is a count of the number of times
the paths $\ell_1 (\cdot)$ and $\ell_2 (\cdot)$ intersect, counted
with both multiplicity and direction. (In this setting, if we let 
$t_*$ denote the point of intersection (often referred to as a 
{\it conjugate point}), then multiplicity corresponds with the dimension 
of the intersection $\ell_1 (t_*) \cap \ell_2 (t_*)$; a precise definition of what we 
mean in this context by {\it direction} will be
given in Section \ref{maslov-section}.) 
%In some cases, the Lagrangian 
%subspaces will be defined along some path in the 
%$(\alpha,\beta)$-plane 
%\begin{equation*}
%\Gamma = \{ (\alpha (t), \beta (t)): t \in \mathcal{I}\},
%\end{equation*}    
%and when it is convenient we will use the notation 
%$\mas (\ell_1, \ell_2; \Gamma)$. 

In order to place our analysis in the usual Hamiltonian framework, 
we express (\ref{sturm}) as a first order system 
for $y = {y_1 \choose y_2}$, with $y_1 = \phi$ and 
$y_2 = P(x) \phi'$. We find 
\begin{equation} \label{ode}
y' = \mathbb{A} (x; \lambda) y,
\end{equation}
where 
\begin{equation*}
\mathbb{A} (x; \lambda) =
\begin{pmatrix}
0 & P(x)^{-1} \\
V(x) - \lambda Q(x) & 0
\end{pmatrix},
\end{equation*}
which can be expressed in the standard linear Hamiltonian form 
\begin{equation*}
Jy' = \mathbb{B} (x; \lambda) y,
\end{equation*}
with 
\begin{equation*}
\mathbb{B} (x; \lambda) =
\begin{pmatrix}
\lambda Q (x) - V(x) & 0 \\
0 & P(x)^{-1}
\end{pmatrix}. 
\end{equation*}

Let $\mathbf{X}_1 (x; \lambda) \in \mathbb{C}^{2n \times n}$ denote the matrix
solution to 
\begin{equation} \label{frame1}
\begin{aligned}
J \mathbf{X}_1' &= \mathbb{B} (x; \lambda) \mathbf{X}_1 \\
\mathbf{X}_1 (0; \lambda) &= J \alpha^*.
\end{aligned}
\end{equation}
We will verify in Section \ref{proof0} that for each 
$(x, \lambda) \in [0,\infty) \times \mathbb{R}$, $\mathbf{X}_1 (x; \lambda)$
is the frame for a Lagrangian subspace of $\mathbb{C}^{2n}$,
$\ell_1 (x; \lambda)$.
Likewise, let $\mathbf{X}_2 (x; \lambda) \in \mathbb{C}^{2n \times n}$ denote the matrix
solution to 
\begin{equation} \label{frame2}
\begin{aligned}
J \mathbf{X}_2' &= \mathbb{B} (x; \lambda) \mathbf{X}_2 \\
\mathbf{X}_2 (\cdot; \lambda) &\in L^2 ((0, \infty), \mathbb{C}^{2n}).
\end{aligned}
\end{equation}
We will verify in Section \ref{ode_section} that for 
\begin{equation} \label{kappa-defined}
\kappa := \inf_{r \in \mathbb{C}^n \backslash \{0\}} \frac{(V_+ r, r)}{(Q_+ r, r)},
\end{equation}
we have $\sigma_{\ess} (\mathcal{L}) \subset [\kappa, \infty)$, where
$\sigma_{\ess} (\cdot)$ denotes essential spectrum, as defined
in Section \ref{ode_section}. Subsequently, we  verify in Section 
\ref{proof0} that for each 
$(x, \lambda) \in [0,\infty) \times (-\infty, \kappa)$, 
$\mathbf{X}_2 (x; \lambda)$
is the frame for a Lagrangian subspace of $\mathbb{C}^{2n}$,
$\ell_2 (x; \lambda)$, and moreover that for 
any $\lambda \in (-\infty, \kappa)$, the asymptotic space
\begin{equation*}
\ell_2^+ (\lambda) := \lim_{x \to \infty} \ell_2 (x; \lambda)
\end{equation*}
is well-defined and Lagrangian (with convergence in the metric
$d$ described above). Finally, we will establish 
that the map $\ell_2^+: (-\infty, \kappa) \to \Lambda (n)$
is continuous.

There are two different ways in which we can formulate 
a relation between the Maslov index and the Morse index, 
depending upon whether we view $x = 0$ as our target 
or $x = + \infty$ as our target. We state these results
respectively as Theorems \ref{target0} and \ref{target+}.
Prior to these statements, we set some terminology with
the following lemma.

\begin{lemma} \label{boundary-inconjugate-lemma} 
Let Assumptions {\bf (A1)}, {\bf (A2)}, and {\bf (A3)} hold, and 
let $\Lambda_{\infty} \in \mathbb{R}$. Then there exists 
$\lambda_{\infty} > \Lambda_{\infty}$ so that 
\begin{equation*}
\ell_1 (0; -\lambda_{\infty}) \cap \ell_2^+ (-\lambda_{\infty}) = \{0\}.
\end{equation*} 
In this case, we refer to $\lambda_{\infty}$ as {\it boundary 
inconjugate}. 
\end{lemma}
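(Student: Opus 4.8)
The plan is to exploit the fact that $\ell_1(0;\lambda) = J\alpha^*$ is \emph{independent} of $\lambda$, so the assertion reduces to showing that the asymptotic target space $\ell_2^+(-\lambda_\infty)$ moves ``transversally'' to the fixed subspace $J\alpha^*$ once $\lambda_\infty$ is taken large enough. First I would recall from Assumption {\bf (A2)} that as $x\to\infty$ the coefficient matrix $\mathbb{A}(x;\lambda)$ converges exponentially to the constant-coefficient matrix $\mathbb{A}_+(\lambda) = \begin{pmatrix} 0 & P_+^{-1} \\ V_+ - \lambda Q_+ & 0\end{pmatrix}$, whose eigenvalues are the square roots of the eigenvalues of $P_+^{-1}(V_+-\lambda Q_+)$ (up to conjugation by a block matrix). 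For $\lambda = -\lambda_\infty < \kappa$, the matrix $V_+ + \lambda_\infty Q_+$ is positive definite with smallest eigenvalue growing like $\lambda_\infty\theta_Q$, so $\mathbb{A}_+(-\lambda_\infty)$ is hyperbolic with no center subspace, and $\ell_2^+(-\lambda_\infty)$ is precisely the stable subspace $\mathbb{S}(-\lambda_\infty)$ of $\mathbb{A}_+(-\lambda_\infty)$ (this identification, and the exponential convergence of $\ell_2(x;\lambda)$ to it, is what Section \ref{proof0} establishes).

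The key computation is then to identify the limit of the stable subspace as $\lambda_\infty\to\infty$. Rescaling, write $\mu = \sqrt{\lambda_\infty}$ and diagonalize: the stable eigenvectors of $\mathbb{A}_+(-\lambda_\infty)$ are of the form ${r \choose -\mu P_+^{1/2}(\,\cdot\,)}$-type vectors, and after normalizing one finds that $\mathbb{S}(-\lambda_\infty)$, written as the column span of a frame ${X_+(\lambda_\infty) \choose Y_+(\lambda_\infty)}$, satisfies $X_+(\lambda_\infty)^{-1}Y_+(\lambda_\infty) \to -\infty$ in the sense that, after dividing by $\mu$, it approaches a negative-definite matrix (concretely $-P_+^{1/2}(Q_+^{-1/2}(\cdots)Q_+^{-1/2})^{1/2}P_+^{1/2}$ or similar). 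Equivalently, in the Lagrangian Grassmannian the subspaces $\ell_2^+(-\lambda_\infty)$ converge, as $\lambda_\infty\to\infty$, to the ``vertical'' Dirichlet-type subspace $\ell_\infty := \{{0 \choose v}: v\in\mathbb{C}^n\}$.

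With that limit in hand, the argument closes by a compactness/continuity step. Since $\ell_2^+(-\lambda_\infty)\to\ell_\infty$ in the metric $d$, and since $\ell_1(0;-\lambda_\infty) = J\alpha^* = \{{-\alpha_2^* v \choose \alpha_1^* v}: v\in\mathbb{C}^n\}$ is a \emph{fixed} Lagrangian subspace, it suffices to check $J\alpha^* \cap \ell_\infty = \{0\}$ and then invoke the fact that transversality is an open condition on $\Lambda(n)\times\Lambda(n)$. But $J\alpha^*\cap\ell_\infty = \{0\}$ is equivalent to $\alpha_2$ being invertible, which need \emph{not} hold in general — so this naive limit is too crude. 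The fix: instead of a single limiting subspace, track the one-parameter family directly. For $w\in \ell_1(0;-\lambda_\infty)\cap\ell_2^+(-\lambda_\infty)$, write $w = {w_1 \choose w_2}$; membership in $\ell_2^+$ forces $w_2 = M(\lambda_\infty)w_1$ with $\|M(\lambda_\infty)^{-1}\| = O(\lambda_\infty^{-1/2})\to 0$ (from the eigenvalue estimate above), so $w_1 = M(\lambda_\infty)^{-1}w_2 \to 0$ unless $w_2\to 0$ as well; meanwhile membership in $J\alpha^*$ gives $\alpha_1 w_1 + \alpha_2 w_2 = 0$ together with the rank-$n$ condition on $\alpha$ forcing $w\neq 0$ whenever $w_1$ or $w_2$ is nonzero. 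Combining, any nonzero intersection vector would violate $\|M(\lambda_\infty)^{-1}\|\to 0$ for $\lambda_\infty$ large, establishing the claim.

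I expect the main obstacle to be the quantitative control of the stable subspace frame $M(\lambda_\infty) = Y_+(\lambda_\infty)X_+(\lambda_\infty)^{-1}$, specifically proving the bound $\|M(\lambda_\infty)^{-1}\| \to 0$ with the correct $\lambda_\infty^{-1/2}$ rate uniformly in the directions; this requires the explicit block-diagonalization of $\mathbb{A}_+(-\lambda_\infty)$ via $P_+^{1/2}$ and $Q_+^{1/2}$ and a careful tracking of the Riccati-type relation the stable frame satisfies, using positive definiteness of $V_+ + \lambda_\infty Q_+$ from $\lambda_\infty < \kappa$ being violated in the right direction (i.e.\ $-\lambda_\infty < \kappa$). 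The continuity of $\ell_2^+$ on $(-\infty,\kappa)$ asserted in the excerpt, together with the exponential-dichotomy machinery it rests on, supplies everything else.
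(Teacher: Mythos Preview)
Your ``fix'' in the last step does not close. You correctly identify that the naive limiting argument requires $\alpha_2$ invertible, but the replacement argument you sketch has the same defect. Carrying it out: suppose for contradiction that the intersection is nontrivial for every $\lambda_\infty > \Lambda_\infty$, pick unit vectors $w^{(k)} = (w_1^{(k)}, w_2^{(k)})$ in the intersection along $\lambda_\infty^{(k)}\to\infty$, and pass to a convergent subsequence. From $w_1^{(k)} = M(\lambda_\infty^{(k)})^{-1} w_2^{(k)}$ and $\|M^{-1}\|\to 0$ you get $w_1^{(k)}\to 0$, hence $w^{(k)} \to (0,w_2^*)$ with $|w_2^*|=1$; the boundary relation $\alpha_1 w_1 + \alpha_2 w_2 = 0$ gives only $\alpha_2 w_2^* = 0$ in the limit. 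When $\ker\alpha_2 \ne \{0\}$ this is no contradiction at all --- the rank-$n$ condition on $\alpha$ tells you nothing more than that $w\ne 0$ iff some component is nonzero, which you already knew. So the sketched argument stalls exactly where the naive one did. (The stronger statement you are implicitly aiming for --- that \emph{all} sufficiently large $\lambda_\infty$ are boundary inconjugate --- is in fact true, but it follows from an energy estimate on the constant-coefficient problem, not from $\|M^{-1}\|\to 0$ alone.)

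The paper's route is quite different and avoids this difficulty entirely. Rather than analyze the large-$\lambda_\infty$ asymptotics of $\ell_2^+$, the paper observes that $\ell_1(0;\lambda)\cap\ell_2^+(\lambda)\ne\{0\}$ if and only if $\lambda$ is an eigenvalue of the constant-coefficient problem $-P_+\phi''+V_+\phi=\lambda Q_+\phi$ on $(0,\infty)$ with the boundary condition at $0$. Since the essential spectrum of that problem lies in $[\kappa,\infty)$, its eigenvalues below $\kappa$ are isolated; combined with the monotonicity of the eigenvalues of $\tilde W^+(\lambda)$ established for the top shelf, this shows that conjugate points cannot persist on any interval of $\lambda$-values. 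Hence if a given $\lambda_\infty$ happens to be conjugate, increasing it slightly breaks the conjugacy. If you want to salvage your direct approach, replace the compactness step by the energy argument the paper uses on the left shelf (Section~\ref{proof0}), applied instead to the asymptotic operator: that yields a uniform lower bound on eigenvalues and hence the stronger conclusion you were reaching for.
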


We emphasize that for any 
$\lambda_{\infty} \in \mathbb{R}$, $\ell_1 (0; -\lambda_{\infty})$
is the Lagrangian subspace with frame 
$\mathbf{X}_1 (0; -\lambda_{\infty}) = J \alpha^*$, independent of 
$\lambda_{\infty}$. Likewise, in Theorems \ref{target0} and \ref{target+}
below, the Lagrangian space $\ell_1 (0; \lambda_0)$ is independent of 
$\lambda_0$. In all such cases, the appearance of a spectral 
coordinate is only for notational consistency, since $\ell_1 (x; \lambda)$
does in general depend on $\lambda$ for all $x > 0$.

In the following statements, we use the notation $\mor (\mathcal{L}; \lambda_0)$
to indicate the number of eigenvalues that $\mathcal{L}$ has, including multiplicities,
on the interval $(-\infty, \lambda_0)$.   

\begin{theorem} \label{target0}
Let Assumptions {\bf (A1)}, {\bf (A2)}, and {\bf (A3)} hold, and 
fix any $\lambda_0 < \kappa$ (with $\kappa$ defined in (\ref{kappa-defined})). 
Then there exists a value $\Lambda_{\infty}$ sufficiently
large so that for any boundary inconjugate $\lambda_{\infty} > \Lambda_{\infty}$,
we have
\begin{equation*}
\mor (\mathcal{L}; \lambda_0) = 
\mas (\ell_1 (0; \lambda_0), \ell_2 (\cdot; \lambda_0); [0, \infty))
- \mas (\ell_1 (0; \cdot), \ell_2^+ (\cdot); [-\lambda_{\infty}, \lambda_0]).
\end{equation*}
\end{theorem}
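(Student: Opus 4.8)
The plan is to realize $\mor(\mathcal{L};\lambda_0)$ as a spectral flow / Maslov index along a suitable closed (or near-closed) contour in the $(x,\lambda)$ plane, and then exploit the homotopy invariance and path additivity of the Maslov index. Specifically, I would work with the two Lagrangian paths $\ell_1(x;\lambda)$ and $\ell_2(x;\lambda)$ on the rectangle $\Gamma$ with corners at $(0,-\lambda_\infty)$, $(\infty,-\lambda_\infty)$, $(\infty,\lambda_0)$, $(0,\lambda_0)$ — more precisely on $[0,L]\times[-\lambda_\infty,\lambda_0]$ with $L\to\infty$ controlled via Assumption \textbf{(A2)}. The first step is to establish that a conjugate point of $(\ell_1,\ell_2)$ in the interior or on the right edge of $\Gamma$ corresponds exactly to $\lambda$ being an eigenvalue of $\mathcal{L}$: indeed $\ell_1(x;\lambda)\cap\ell_2(x;\lambda)\neq\{0\}$ means there is a nontrivial solution of \eqref{ode} satisfying the boundary condition \eqref{sturm-bc} at $x=0$ (membership in $\ell_1$) and lying in the $L^2$ solution space (membership in $\ell_2$), hence an eigenfunction. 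Here the monotonicity in $\lambda$ of the crossing form, coming from $\mathbb{B}_\lambda = \operatorname{diag}(Q,0)\ge 0$ with $Q$ uniformly positive definite, is the standard input that makes all crossings along vertical segments positive (a Sturm-type oscillation argument), so signed counts become honest counts.

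Next I would invoke homotopy invariance of the Maslov index around the boundary of $\Gamma$: the total signed count of intersections of $\ell_1$ with $\ell_2$ around $\partial\Gamma$ is zero, since the two-parameter family is jointly continuous and $\Gamma$ is contractible. This is where Lemma \ref{boundary-inconjugate-lemma} enters — choosing $\lambda_\infty$ boundary inconjugate guarantees that the bottom-right \emph{corner} contributes nothing and, combined with a large-$\lambda_\infty$ estimate, that the entire bottom edge $\{\lambda=-\lambda_\infty\}$ is free of conjugate points (no eigenvalues of $\mathcal{L}$ below $-\Lambda_\infty$ since $\sigma(\mathcal{L})$ is bounded below). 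The four edges of $\partial\Gamma$ then give: the bottom edge $=0$; the left edge $\{x=0\}$ is constant ($\mathbf{X}_1(0;\lambda)=J\alpha^*$ for all $\lambda$), so contributes $0$; the right edge, after letting $L\to\infty$, is $\mas(\ell_1(0;\cdot),\ell_2^+(\cdot);[-\lambda_\infty,\lambda_0])$ (here one uses the established continuity of $\ell_2^+$ and a uniform-in-$x$ estimate to pass to the limit, i.e. that $\ell_2(L;\lambda)\to\ell_2^+(\lambda)$ uniformly on the compact $\lambda$-interval); and the top edge $\{\lambda=\lambda_0\}$, traversed from $x=\infty$ down to $x=0$, equals $-\mas(\ell_1(0;\lambda_0),\ell_2(\cdot;\lambda_0);[0,\infty))$. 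Setting the signed sum to zero and rearranging yields the claimed identity, once one checks the signs of the top-edge crossings all have one consistent sign so that the Maslov index there literally counts eigenvalues with multiplicity $=\mor(\mathcal{L};\lambda_0)$.

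The main obstacle I anticipate is the passage to the limit $L\to\infty$ on the right edge, i.e. justifying that the Maslov index on the truncated right edge stabilizes and equals the index computed with $\ell_2^+$. This requires a uniform exponential-convergence estimate $d(\ell_2(x;\lambda),\ell_2^+(\lambda))\le Ce^{-\eta x}$ that is uniform for $\lambda$ in the compact interval $[-\lambda_\infty,\lambda_0]$ (using \textbf{(A2)} and the spectral gap $\lambda<\kappa$, which gives uniform hyperbolicity of $\mathbb{A}(x;\lambda)$ at $x=\infty$ and hence an exponential dichotomy with uniformly bounded constants), together with the fact that for $L$ large there are no conjugate points of $(\ell_1(0;\cdot),\ell_2(L;\cdot))$ near the moving endpoint. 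A secondary technical point is showing the whole bottom edge is conjugate-point-free, not merely the corner: this follows because for $\lambda$ very negative, $\ell_2(x;\lambda)$ stays uniformly close to $\ell_2^+(\lambda)$ which in turn converges as $\lambda\to-\infty$ to a fixed Lagrangian transverse to $\ell_1(0;\cdot)=J\alpha^*\mathbb{C}^n$ — again a hyperbolicity estimate — so for $\Lambda_\infty$ large enough transversality persists along the entire segment. Modulo these estimates, which are standard exponential-dichotomy bookkeeping, the proof is a clean application of the homotopy invariance and additivity properties of the Maslov index.
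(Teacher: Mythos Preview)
Your overall architecture (Maslov box, homotopy invariance, path additivity) matches the paper's, but you have misidentified which shelf produces the Morse index, and this is a genuine gap, not just a relabeling. In the paper's setup the target is the \emph{fixed} space $\ell_1(0;\cdot)$ and the evolving space is $\ell_2(x;\lambda)$. Along what you call the ``left edge'' $\{x=0\}$, you are comparing $\ell_1(0;\lambda)$ with $\ell_2(0;\lambda)$; the fact that $\ell_1(0;\lambda)=J\alpha^*\mathbb{C}^n$ is constant in $\lambda$ does \emph{not} make this Maslov index zero, because $\ell_2(0;\lambda)$ genuinely moves with $\lambda$. An intersection $\ell_1(0;\lambda)\cap\ell_2(0;\lambda)\neq\{0\}$ means exactly that $\lambda$ is an eigenvalue of $\mathcal{L}$, and the $\lambda$-monotonicity you correctly invoke (from $\mathbb{B}_\lambda=\operatorname{diag}(Q,0)$) shows this shelf contributes $-\mor(\mathcal{L};\lambda_0)$. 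Conversely, your ``top edge'' $\{\lambda=\lambda_0\}$ has $\lambda$ fixed and $x$ varying, so crossings there are conjugate points in $x$ of the single equation at $\lambda=\lambda_0$; they do not enumerate the eigenvalues below $\lambda_0$ and there is no reason for their signed count to equal $\mor(\mathcal{L};\lambda_0)$. Once you swap these two identifications, the box argument closes exactly as in the paper.

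There is also a gap in your treatment of the ``bottom edge'' $\{\lambda=-\lambda_\infty\}$. Crossings along that shelf correspond to $-\lambda_\infty$ being an eigenvalue of the \emph{truncated} operator $\mathcal{L}_s$ on $(s,\infty)$ with the boundary condition transplanted to $x=s$ (for each $s\in[0,\infty)$), not merely of $\mathcal{L}$ itself; knowing $\sigma(\mathcal{L})$ is bounded below does not immediately give a bound uniform in $s$. Your proposed fix --- that $\ell_2(x;-\lambda_\infty)$ stays uniformly close to $\ell_2^+(-\lambda_\infty)$ for \emph{all} $x\ge 0$ --- is not justified by the exponential-dichotomy estimates, which only give closeness for large $x$. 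The paper instead runs an energy estimate on $\mathcal{L}_s$ (integration by parts plus the projector decomposition of the boundary condition) to obtain a lower bound on eigenvalues that is uniform in $s$; this is the missing ingredient.
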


\begin{theorem} \label{target+}
Let Assumptions {\bf (A1)}, {\bf (A2)}, and {\bf (A3)} hold, and 
fix any $\lambda_0 < \kappa$ (with $\kappa$ defined in (\ref{kappa-defined})). 
Then there exists a value $\Lambda_{\infty}$ sufficiently
large so that for any boundary inconjugate $\lambda_{\infty} > \Lambda_{\infty}$,
we have
\begin{equation*}
\mor (\mathcal{L}; \lambda_0) = 
- \mas (\ell_1 (\cdot; \lambda_0), \ell_2^+ (\lambda_0); [0, \infty))
- \mas (\ell_1 (0; \cdot), \ell_2^+ (\cdot); [-\lambda_{\infty}, \lambda_0]).
\end{equation*}
\end{theorem}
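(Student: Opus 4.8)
The plan is to derive Theorem~\ref{target+} from Theorem~\ref{target0}. Subtracting the two proposed formulas, the common spectral term $\mas(\ell_1(0;\cdot),\ell_2^+(\cdot);[-\lambda_\infty,\lambda_0])$ cancels, so it suffices to prove the purely spatial identity
\begin{equation*}
\mas\bigl(\ell_1(0;\lambda_0),\ell_2(\cdot;\lambda_0);[0,\infty)\bigr)
= -\mas\bigl(\ell_1(\cdot;\lambda_0),\ell_2^+(\lambda_0);[0,\infty)\bigr)
\end{equation*}
for every $\lambda_0<\kappa$, and then substitute it into the formula of Theorem~\ref{target0} with the same $\Lambda_\infty$. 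Since the Maslov index can change only as $\lambda_0$ crosses an eigenvalue of $\mathcal L$, and every $\lambda_0<\kappa$ lies below the essential spectrum, I would first prove this identity for $\lambda_0\notin\sigma(\mathcal L)$ and then recover the general case by a limiting argument (letting $\lambda_0$ approach an eigenvalue from the left and using the left-continuity of $\mor(\mathcal L;\cdot)$), together with the paper's conventions for endpoint crossings of the Maslov index.

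So fix $\lambda_0\notin\sigma(\mathcal L)$ with $\lambda_0<\kappa$ and a cutoff $X>0$. The first step is the finite-interval identity
\begin{equation*}
\mas\bigl(\ell_1(0;\lambda_0),\ell_2(\cdot;\lambda_0);[0,X]\bigr)
= -\mas\bigl(\ell_1(\cdot;\lambda_0),\ell_2(X;\lambda_0);[0,X]\bigr),
\end{equation*}
which I would prove by a Maslov-box argument. On the rectangle $[0,1]_s\times[0,X]_x$ introduce the continuous family of Lagrangian pairs $\bigl(L_1(s,x),L_2(s,x)\bigr):=\bigl(\ell_1(sx;\lambda_0),\ell_2(x;\lambda_0)\bigr)$; by homotopy invariance its Maslov index around the (counterclockwise) boundary loop is zero. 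The edge $s=0$ contributes $\mas(\ell_1(0;\lambda_0),\ell_2(\cdot;\lambda_0);[0,X])$; the edge $x=X$ contributes $-\mas(\ell_1(\cdot;\lambda_0),\ell_2(X;\lambda_0);[0,X])$ after the reparametrization $u=sX$; the edge $x=0$ carries the constant, transverse pair $\bigl(\ell_1(0;\lambda_0),\ell_2(0;\lambda_0)\bigr)$ and contributes nothing; and on the edge $s=1$ the intersection $\ell_1(x;\lambda_0)\cap\ell_2(x;\lambda_0)$ is conserved by the symplectic flow of \eqref{ode}, hence equals $\{0\}$ for every $x$ since $\lambda_0\notin\sigma(\mathcal L)$, so this edge contributes nothing either. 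Adding the four contributions gives the finite-interval identity.

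The second step is to let $X\to\infty$. On the left-hand side no conjugate point escapes to $+\infty$, because $\ell_2(x;\lambda_0)\to\ell_2^+(\lambda_0)$ at an exponential rate (Assumption~{\bf (A2)} and the exponential dichotomy of \eqref{ode} for $\lambda<\kappa$ used to construct $\ell_2$), so the path $\ell_2(\cdot;\lambda_0)$ is eventually confined to a small neighbourhood of $\ell_2^+(\lambda_0)$ and meets the fixed subspace $\ell_1(0;\lambda_0)$ only finitely often. On the right-hand side one first uses this same exponential convergence, together with local constancy of the Maslov index under small perturbations of the target, to replace $\ell_2(X;\lambda_0)$ by $\ell_2^+(\lambda_0)$ for $X$ large; then $\mas(\ell_1(\cdot;\lambda_0),\ell_2^+(\lambda_0);[0,X])$ stabilizes, because for $\lambda_0\notin\sigma(\mathcal L)$ the subspace $\ell_1(x;\lambda_0)$ tends as $x\to\infty$ to the unstable subspace of $\mathbb A_+(\lambda_0):=\lim_{x\to\infty}\mathbb A(x;\lambda_0)$, which is transverse to $\ell_2^+(\lambda_0)$, the associated stable subspace. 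Passing to the limit in the finite-interval identity gives the spatial identity, and substituting into Theorem~\ref{target0} completes the proof. (Alternatively, one could argue directly via a single Maslov box for the pair $\bigl(\ell_1(x;\lambda),\ell_2^+(\lambda)\bigr)$ over the $(x,\lambda)$-rectangle $[0,X_\infty]\times[-\lambda_\infty,\lambda_0]$, whose top and left edges give the two terms of the statement, whose bottom edge vanishes for $\Lambda_\infty$ large, and whose right edge equals $-\mor(\mathcal L;\lambda_0)$ for $X_\infty$ large by a monotonicity-in-$\lambda$ lemma and Lemma~\ref{boundary-inconjugate-lemma} — but this essentially re-derives Theorem~\ref{target0}, so the reduction above is the shorter route.)

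I expect the passage $X\to\infty$ to be the main obstacle, and in particular the two uniform inputs it rests on: exchanging the truncated frame $\ell_2(X;\lambda_0)$ for $\ell_2^+(\lambda_0)$ without changing the relevant Maslov index once $X$ is large, and ruling out conjugate points of either pair running off to $x=+\infty$. Both hinge on the exponential dichotomy of \eqref{ode} for $\lambda<\kappa$ and on identifying $\ell_2^+(\lambda_0)$ — and, when $\lambda_0\notin\sigma(\mathcal L)$, $\lim_{x\to\infty}\ell_1(x;\lambda_0)$ — with the stable (respectively unstable) subspace of $\mathbb A_+(\lambda_0)$; if these decay rates and limits are not already available from the ODE analysis and the study of the frames \eqref{frame1}--\eqref{frame2}, they would have to be established here. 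A secondary, essentially bookkeeping, difficulty is matching the endpoint conventions for the Maslov index — at $x=0$, where $\ell_1(0;\lambda_0)$ and $\ell_2^+(\lambda_0)$ need not be transverse, at $x=+\infty$, and at eigenvalues $\lambda_0$ — which is exactly why the reduction is run first for $\lambda_0\notin\sigma(\mathcal L)$ and, in the limiting step, through generic cutoffs $X$.
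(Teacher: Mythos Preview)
Your approach is sound but genuinely different from the paper's. The paper does \emph{not} derive Theorem~\ref{target+} from Theorem~\ref{target0}; it runs an independent Maslov box in the $(x,\lambda)$-plane for the pair $(\ell_1(x;\lambda),\ell_2(x_\infty;\lambda))$ at a large fixed cutoff $x_\infty$: the top shelf ($x=x_\infty$) yields $-\mor(\mathcal L;\lambda_0)$ by monotonicity in $\lambda$, the left shelf requires a fresh energy estimate for the finite-interval problem on $(0,s)$ (different from the half-line estimate used for Theorem~\ref{target0}), and a separate claim replaces the target $\ell_2(x_\infty;\cdot)$ by $\ell_2^+(\cdot)$ on the combined bottom-plus-right shelves via a spectral-proximity argument for the two associated $\tilde W$ matrices, case-split on whether $\lambda_0\in\sigma(\mathcal L)$. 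The spatial identity you prove directly appears in the paper only \emph{a posteriori}, as the remark following Theorem~\ref{target+}. Your route buys economy---it reuses Theorem~\ref{target0} wholesale and avoids the new left-shelf estimate, and the $(s,x)$-homotopy with the conserved-intersection observation on the diagonal $s=1$ is a clean way to obtain the finite-$X$ identity---while the paper's route is more self-contained and makes the Morse count appear on its own shelf. The limiting issues you flag (replacing $\ell_2(X;\lambda_0)$ by $\ell_2^+(\lambda_0)$, possible non-transversality at $x=0$, and the case $\lambda_0\in\sigma(\mathcal L)$) are real and correspond precisely to the case analysis inside the paper's claim; your plan to handle the eigenvalue case by a left limit is a reasonable substitute for the paper's direct Case~2 argument.
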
 

\begin{remark} For Theorem \ref{target+}, the target space $\ell_2^+ (\lambda_0)$
can be replaced by the Dirichlet space $\ell_D$ (with frame $\mathbf{X}_D = {0 \choose I_n}$),
at the cost of additional terms that can be expressed explicitly. See 
Corollary \ref{corollary+}. We also note that by combining Theorems \ref{target0}
and \ref{target+} we see that 
\begin{equation*}
\mas (\ell_1 (0; \lambda_0), \ell_2 (\cdot; \lambda_0); [0, \infty))
 = -  \mas (\ell_1 (\cdot; \lambda_0), \ell_2^+ (\lambda_0); [0, \infty)).
\end{equation*}  
\end{remark}

\section{ODE Preliminaries} \label{ode_section}

In this section, we develop preliminary ODE results that will serve as the 
foundation of our analysis. This development is standard, and follows \cite{ZH}, pp. 
779-781 (see, e.g., \cite{Coppel} for similar 
analyses). We begin by clarifying our terminology.

\begin{definition} \label{spectrum}
We define the point spectrum of $\mathcal{L}$, denoted $\sigma_{\operatorname{pt}} (\mathcal{L})$, as the set
\begin{equation*}
\sigma_{\operatorname{pt}} (\mathcal{L}) = \{\lambda \in \mathbb{R}: \mathcal{L} \phi = \lambda \phi 
\, \, \text{for some} \, \, \phi \in \mathcal{D} (\mathcal{L}) \backslash \{0\}\}.
\end{equation*}
Elements of the point spectrum will be referred to as eigenvalues. We define the essential spectrum of 
$\mathcal{L}$, denoted $\sigma_{\operatorname{ess}} (\mathcal{L})$, 
as the values in $\mathbb{C}$ (and so $\mathbb{R}$, by self-adjointness) that 
are not in the resolvent set of $\mathcal{L}$ and are not isolated eigenvalues
of finite multiplicity.
\end{definition}

We note that the total spectrum is 
$\sigma (\mathcal{L}) = \sigma_{\operatorname{pt}} (\mathcal{L}) \cup \sigma_{\operatorname{ess}} (\mathcal{L})$,
and the {\it discrete} spectrum is defined as 
$\sigma_{\operatorname{discrete}} (\mathcal{L}) = \sigma (\mathcal{L}) \backslash \sigma_{\operatorname{ess}} (\mathcal{L})$.
Since our analysis takes place entirely away from essential spectrum, 
the eigenvalues we are counting are elements of the discrete spectrum.  

If we consider (\ref{sturm}) as $x \to \infty$, we obtain the asymptotic system 
\begin{equation} \label{asymptotic}
- P_{+} \phi'' + V_{+} \phi = \lambda Q_{+} \phi.
\end{equation}
For operators such as $\mathcal{L}$ posed on $\mathbb{R}$, it's 
well-known that the essential spectrum is entirely determined by 
the associated asymptotic problems at $\pm \infty$ (see, for 
example, in \cite{Henry, KP}). As we will verify at the end of 
this section, it's straightforward to show that a similar result 
holds true in the current setting.   In particular, if we look for solutions 
of (\ref{asymptotic}) of the form $\phi (x) = e^{i k x} r$, for some scalar constant $k \in \mathbb{R}$
and (non-zero) constant vector $r \in \mathbb{C}^n$ then the essential spectrum 
will be confined to the allowable values of $\lambda$. For (\ref{asymptotic}),
we find 
\begin{equation*}
(k^2 P_{+} + V_{+}) r = \lambda Q_{+} r,
\end{equation*}
and upon taking an inner product with $r$ we see that 
\begin{equation*}
k^2 (P_{+} r, r) + (V_{+} r, r) = \lambda (Q_{+} r, r).
\end{equation*}
Since $P_{+}$ and $Q_{+}$ are positive definite, we see that 
\begin{equation*}
\lambda(k) \ge \frac{(V_{+}r, r)}{(Q_{+}r, r)},
\end{equation*}
for all $k \in \mathbb{R}$, and consequently   
$\sigma_{\operatorname{ess}} (\mathcal{L}) \subset [\kappa, \infty)$,
where 
\begin{equation*}
\kappa = \inf_{r \in \mathbb{C}^n \backslash \{0\}} \frac{(V_+ r, r)}{(Q_+ r, r)} > 0.
\end{equation*}

In order to describe the Lagrangian subspaces $\ell_2 (x; \lambda)$, we need to 
characterize the solutions of (\ref{frame2}) in $L^2 ((0, \infty), \mathbb{C}^{2n})$. 
As a starting point for this 
characterization, we fix any $\lambda < \kappa$ and look for solutions of 
(\ref{asymptotic}) of the form $\phi (x;\lambda) = e^{\mu x} r$, 
where in this case $\mu$ is a scalar function of $\lambda$, and 
$r$ is a vector function of $\lambda$ (in $\mathbb{C}^n$). Computing
directly, we obtain the relation 
\begin{equation*}
(-\mu^2 P_{+} + V_{+} - \lambda Q_{+}) r = 0,
\end{equation*}
which we can rearrange as 
\begin{equation*}
P_{+}^{-1} (V_+ - \lambda Q_+) r = \mu^2 r.
\end{equation*}
Since $P_+$ is positive definite, we can work with the 
inner product 
\begin{equation} \label{inner-product}
(r,s)_+ := (P_+ r, s)_{\mathbb{C}^n},
\end{equation}
and it's clear that for $\lambda \in \mathbb{R}$, the operator
$P_{+}^{-1} (V_+ - \lambda Q_+)$ is self-adjoint with respect to 
this inner product, and moreover positive definite for 
$\lambda < \kappa$. We conclude that for $\lambda < \kappa$, 
the eigenvalues  $\mu^2$ will be positive real values, and that 
the associated eigenvectors can be chosen to be orthonormal 
with respect to the inner product (\ref{inner-product}).   
For each of the $n$ values of $\mu^2$ (counted with multiplicity),
we can associate two values $\pm \sqrt{\mu^2}$. By a choice of 
labeling, we can split these values into $n$ negative values 
$\{ \mu_k (\lambda)\}_{k=1}^n$ and $n$ positive values 
$\{ \mu_k\}_{k=n+1}^{2n}$ with the correspondence (again, by
labeling convention)
\begin{equation*}
\mu_k (\lambda) = - \mu_{2n+1-k} (\lambda); k = 1,2, \dots, n.
\end{equation*}
For $k=1,2,\dots,n$, we denote by $r_k$ the eigenvector of 
$P_{+}^{-1} (V_+ - \lambda Q_+)$ with associated eigenvalue
$\mu_k^2 = \mu_{2n+1-k}^2$. I.e., 
\begin{equation*}
P_+^{-1} (V_+ - \lambda Q_+) r_k = \mu_k^2 r_k; \quad k = 1, 2, \dots, n.
\end{equation*} 

Recalling (\ref{ode}), we note that under our asymptotic 
assumptions on $P(x)$, $Q(x)$, and $V(x)$, the limit 
\begin{equation*}
\mathbb{A}_+ (\lambda) := \lim_{x \to +\infty} \mathbb{A} (x; \lambda)
\end{equation*} 
is well-defined. The values $\{\mu_k\}_{k=1}^{2n}$ described above comprise a labeling
of the eigenvalues of $\mathbb{A}_+ (\lambda)$. Each of 
these eigenvalues is semi-simple, and so we can associate
them with a choice of eigenvectors $\{\mathbf{r}_k\}_{k=1}^{2n}$
so that 
\begin{equation*}
\mathbb{A}_+ (\lambda) \mathbf{r}_k (\lambda) 
= \mu_k (\lambda) \mathbf{r}_k (\lambda), \quad 
k \in \{1, 2, \dots, 2n\}.
\end{equation*}
We see that for $k = 1, 2, \dots, n$, we have relations 
\begin{equation*}
\mathbf{r}_k = {r_k \choose \mu_k P_+ r_k};
\quad
\mathbf{r}_{n+k} = {r_k \choose - \mu_{k} P_+ r_k}. 
\end{equation*}
If we set 
\begin{equation} \label{R-defined}
R (\lambda) := 
\begin{pmatrix}
r_1 (\lambda) & r_2 (\lambda) & \dots r_n (\lambda)
\end{pmatrix},
\end{equation}
and 
\begin{equation} \label{D-defined}
D (\lambda) = \diag 
\begin{pmatrix}
\mu_1 (\lambda) & \mu_2 (\lambda) & \dots & \mu_n (\lambda)
\end{pmatrix},
\end{equation}
then we can express a frame for the eigenspace of $\mathbb{A}_+ (\lambda)$
associated with negative eigenvalues as $\mathbf{X}_2^+ = {R \choose P_+ R D}$,
and likewise we can express a frame for the eigenspace of $\mathbb{A}_+ (\lambda)$
associated with positive eigenvalues as $\tilde{\mathbf{X}}_2^+ = {R \choose - P_+ R D}$.

\begin{lemma} \label{ode-lemma}
Assume {\bf (A1)} and {\bf (A2)}, and let $\{\mu_k (\lambda)\}_{k=1}^{2n}$
and $\{\mathbf{r}_k (\lambda) \}_{k=1}^{2n}$ be as described above. 
Then there exists a $\lambda$-dependent family of bases $\{\mathbf{y}_k (\cdot; \lambda)\}_{k=1}^n$,
$\lambda \in (-\infty, \kappa)$, 
for the spaces of $L^2 ((0, \infty), \mathbb{C}^{2n})$ solutions of (\ref{ode}), 
chosen so that  
\begin{equation*}
\mathbf{y}_k (x; \lambda) = e^{\mu_k (\lambda) x} (\mathbf{r}_k (\lambda) + \mathbf{E}_k (x; \lambda));
\quad k = 1, 2, \dots, n,
\end{equation*}
where 
\begin{equation*}
 \mathbf{E}_k (x; \lambda) = \mathbf{O} (e^{- \tilde{\eta} x})
\end{equation*} 
for some $\tilde{\eta} > 0$, and where the $\mathbf{O} (\cdot)$ term is 
uniform for $\lambda \in (-\infty, \kappa)$. 

Moreover, a basis $\{\mathbf{y}_k (\cdot; \lambda)\}_{k = n+1}^{2n}$ for the space of 
non-$L^2 ((0,\infty), \mathbb{C}^n)$ solutions of (\ref{ode}) can be chosen so that
\begin{equation*}
\mathbf{y}_k (x; \lambda) = e^{\mu_k (\lambda) x} (\mathbf{r}_{2n+1-k} (\lambda) + \mathbf{E}_k (x; \lambda));
\quad k = n+1, n+2, \dots, 2n
\end{equation*} 
with $\{\mathbf{E}_k (x; \lambda))\}_{k=n+1}^{2n}$ satisfying the same 
properties as $\{\mathbf{E}_k (x; \lambda))\}_{k=1}^{n}$. 

Finally, for each $k \in \{1, 2, \dots, 2n\}$ and each $x > 0$, 
$\mathbf{y}_k (x; \cdot) \in C^1 ((-\infty, \kappa), \mathbb{C}^{2n})$.
\end{lemma}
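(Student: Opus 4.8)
The plan is to build all $2n$ solutions of (\ref{ode}) by treating the system as an exponentially small perturbation of its constant-coefficient limit $y' = \mathbb{A}_+ (\lambda) y$, following the gap/conjugation-lemma scheme of \cite{ZH}. Write $\mathbb{A} (x; \lambda) = \mathbb{A}_+ (\lambda) + \mathbb{E} (x; \lambda)$; since $P(x) \to P_+$ exponentially with $P_+$ positive definite, $P(x)^{-1} \to P_+^{-1}$ at the same rate, so {\bf (A2)} yields $\|\mathbb{E} (x; \lambda)\| \le C (1 + |\lambda|) e^{-\eta x}$ for a.e. $x \ge 0$, the entire $\lambda$-dependence sitting in the lower-left block. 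Fixing $k$ and writing $\mathbf{v}_k (\lambda)$ for the eigenvector of $\mathbb{A}_+ (\lambda)$ with eigenvalue $\mu_k (\lambda)$ occurring in the statement, I look for $\mathbf{y}_k (x; \lambda) = e^{\mu_k (\lambda) x}(\mathbf{v}_k (\lambda) + \mathbf{E}_k (x; \lambda))$; substituting into (\ref{ode}) shows that $\mathbf{E}_k$ must solve
\begin{equation*}
\mathbf{E}_k' = (\mathbb{A}_+ (\lambda) - \mu_k (\lambda) I) \mathbf{E}_k + \mathbb{E} (x; \lambda)(\mathbf{v}_k (\lambda) + \mathbf{E}_k), \qquad \lim_{x \to \infty} \mathbf{E}_k (x; \lambda) = 0.
\end{equation*}

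Next I would split $\mathbb{C}^{2n}$ by the sign of $\mu_j (\lambda) - \mu_k (\lambda)$ --- all these being real --- and write $\Pi^{<}, \Pi^{=}, \Pi^{>}$ for the corresponding spectral projections of $\mathbb{A}_+ (\lambda) - \mu_k (\lambda) I$, turning the problem above into the integral equation
\begin{equation*}
\mathbf{E}_k (x) = \int_0^x e^{(\mathbb{A}_+ - \mu_k I)(x - s)} \Pi^{<} \mathbb{E} (s)(\mathbf{v}_k + \mathbf{E}_k (s))\, ds - \int_x^\infty e^{(\mathbb{A}_+ - \mu_k I)(x - s)} (\Pi^{=} + \Pi^{>}) \mathbb{E} (s)(\mathbf{v}_k + \mathbf{E}_k (s))\, ds,
\end{equation*}
which also encodes the convenient normalization $\Pi^{<} \mathbf{E}_k (0) = 0$. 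For $\tilde{\eta} > 0$ smaller than $\eta$ and than the spectral gap of $\mathbb{A}_+ (\lambda)$ over a given compact $\Lambda \subset (-\infty, \kappa)$, a Picard iteration converges in the weighted space $\{ \mathbf{E} \in C([0,\infty), \mathbb{C}^{2n}) : \sup_{x \ge 0} e^{\tilde{\eta} x} \|\mathbf{E} (x)\| < \infty \}$: the $\Pi^{<}$-semigroup decays, $\|e^{(\mathbb{A}_+ - \mu_k I)(x-s)} \Pi^{>}\| \le C$ for $s \ge x$, and the $\Pi^{=}$-block is semisimple, so each iteration gains a factor controlled by $\int_x^\infty \|\mathbb{E} (s)\|\,ds$ with the usual $1/m!$ from the nested integrations. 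Its limit is the required $\mathbf{E}_k$, and these bounds give $\mathbf{E}_k (x; \lambda) = \mathbf{O} (e^{-\tilde{\eta} x})$. For $k \le n$ all $\mu_j < 0$, so $\mathbf{y}_k \in L^2$, while for $k \ge n+1$ the factor $e^{\mu_k x}$ forces $\mathbf{y}_k \notin L^2$; comparing leading exponential orders shows $\{\mathbf{y}_k\}_{k=1}^{2n}$ is linearly independent, hence a basis of the full solution space of (\ref{ode}), inside which $\{\mathbf{y}_k\}_{k=1}^n$ spans the $L^2$ solutions.

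For the $C^1$-dependence, note that $\mathbb{A}(x; \cdot)$ is affine in $\lambda$ and that $\lambda \mapsto P_+^{-1}(V_+ - \lambda Q_+)$ is self-adjoint with respect to $(\cdot, \cdot)_+$; by Rellich's theorem its eigenvalues $\mu_k(\lambda)^2$ and a corresponding eigenvector family --- hence $\mu_k(\lambda)$, $\mathbf{v}_k(\lambda)$, and the spectral projections of $\mathbb{A}_+(\lambda)$ --- may be chosen real-analytically on $(-\infty, \kappa)$, where no $\mu_k^2$ vanishes and the gap stays positive. Differentiating the integral equation in $\lambda$, the candidate $\partial_\lambda \mathbf{E}_k$ satisfies the same linear fixed-point equation with forcing continuous in $(x, \lambda)$ and lying in the same weighted space; solving it shows $\lambda \mapsto \mathbf{E}_k(\cdot; \lambda)$ is $C^1$ into that space, whence $\mathbf{y}_k(x; \cdot) \in C^1((-\infty, \kappa), \mathbb{C}^{2n})$ for each fixed $x > 0$.

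The step I expect to be the real obstacle is the claimed uniformity of the $\mathbf{O}(e^{-\tilde{\eta} x})$ bound over the whole unbounded interval $(-\infty, \kappa)$. On compact $\lambda$-subintervals the estimates above are uniform, but at both ends the naive argument degenerates: as $\lambda \to \kappa^-$ the gap between the stable and unstable eigenvalues of $\mathbb{A}_+(\lambda)$ closes and the associated eigenvectors become parallel, while as $\lambda \to -\infty$ both $\|\mathbb{A}_+(\lambda)\|$ and the bound on $\|\mathbb{E}(x; \lambda)\|$ grow linearly in $|\lambda|$, so $\int_0^\infty \|\mathbb{E}(x; \lambda)\|\,dx$ is unbounded. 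For the large-$|\lambda|$ end I would renormalize the fast $\lambda$-behavior before perturbing --- replacing the comparison semigroup by the Liouville--Green ansatz $e^{\mu_k(\lambda) x}(\mathbf{v}_k(\lambda) + \cdots)$, whose residual perturbation is uniformly integrable in $x$ and bounded in $\lambda$ --- and for a left neighborhood of $\kappa$, where the eigenvalue (but not $\mathbf{y}_k$ itself) degenerates, I would first pass to a frame that diagonalizes the near-resonant $2 \times 2$ block. With those two ends handled, the remainder is routine.
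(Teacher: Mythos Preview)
Your approach is essentially the paper's: write $\mathbb{A} = \mathbb{A}_+ + \mathbb{E}$ with $\mathbb{E} = \mathbf{O}(e^{-\eta x})$, set $y = e^{\mu_k x} z$, split via spectral projectors of $\mathbb{A}_+ - \mu_k I$, and solve the resulting variation-of-constants integral equation---the only difference being that the paper gets its contraction on $L^\infty([M,\infty))$ by taking $M$ large and then continues the solution back to $x=0$, rather than working in a weighted space on all of $[0,\infty)$. Your concern about uniformity of the $\mathbf{O}(e^{-\tilde\eta x})$ bound over the full interval $(-\infty,\kappa)$ is well-placed: the paper's argument, as written, also only yields constants depending on $\lambda$ (through $M$, the projector bounds, and the gap), and does not address the endpoint degenerations you identify.
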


\begin{proof}
For any $\lambda < \kappa$, we follow \cite{ZH} and write (\ref{ode}) 
as 
\begin{equation} \label{ode-asymptotic}
y' = \mathbb{A}_+ y + \mathbb{E} (x; \lambda) y,
\end{equation}
where 
\begin{equation*}
\mathbb{E} (x; \lambda) = \mathbb{A} (x; \lambda) - \mathbb{A}_+ (\lambda)
= \mathbf{O} (e^{- \eta x}).
\end{equation*}
We can now fix a particular index $k \in \{1, 2, \dots, n\}$, 
and look for solutions to (\ref{ode-asymptotic}) of the form 
\begin{equation*}
y(x; \lambda) = e^{\mu_k (\lambda) x} z (x; \lambda),
\end{equation*}
for which 
\begin{equation} \label{z-equation}
z' = (\mathbb{A}_+ (\lambda) - \mu_k (\lambda)) z + \mathbb{E} (x; \lambda) z.
\end{equation}

Based on $\eta$, let $\eta_1, \eta_2 \in \mathbb{R}_+$ satisfy
$0 < \eta_1 < \eta_2 < \eta$. Then there exists a neighborhood of 
$\lambda$ on which we can define a continuous projector 
$\mathcal{P}_k (\lambda)$ onto the direct sum of all eigenspaces of
$\mathbb{A}_+ (\lambda)$ with eigenvalues $\tilde{\mu}$ satisfying
$\tilde{\mu} < \mu_k - \eta_1$, and likewise a projector 
$\mathcal{Q}_k (\lambda) = I - \mathcal{P}_k (\lambda)$ projecting 
onto the direct sum of all eigenspaces of $\mathbb{A}_+ (\lambda)$
with eigenvalues $\tilde{\mu}$ satisfying
\begin{equation*}
\tilde{\mu} \ge \mu_k - \eta_1 > \mu_k - \eta_2.
\end{equation*} 
For some fixed value $M > 0$ taken sufficiently large, we will look 
for solutions to (\ref{z-equation}) of the form 
\begin{equation} \label{z-integral}
\begin{aligned}
z (x; \lambda) &= \mathbf{r}_k (\lambda) 
+ \int_M^x e^{(\mathbb{A}_+ (\lambda) - \mu_k (\lambda)I) (x-y)} \mathcal{P}_k (\lambda) \mathbb{E} (y; \lambda) z (y; \lambda) dy \\
&- \int_x^{+\infty} e^{(\mathbb{A}_+ (\lambda) - \mu_k (\lambda) I) (x - y)} \mathcal{Q}_k (\lambda) \mathbb{E} (y; \lambda) z (y; \lambda) dy.
\end{aligned}
\end{equation}

We proceed by contraction mapping, defining an operator action $\mathcal{T}z$ 
as the right-hand side of (\ref{z-integral}). For this, we use the following
fact, which is immediate from the definitions of $\mathcal{P}_k$ and 
$\mathcal{Q}_k$ : there exist constants $C_1$ and $C_2$ so that 
\begin{equation} \label{projector-estimates}
\begin{aligned}
|e^{(\mathbb{A}_+ (\lambda) - \mu_k (\lambda)I) (x-y)} \mathcal{P}_k (\lambda)| &\le C_1 e^{- \eta_1 (x-y)} \\
|e^{(\mathbb{A}_+ (\lambda) - \mu_k (\lambda)I) (x-y)} \mathcal{Q}_k (\lambda)| &\le C_2 e^{- \eta_2 (y-x)}. 
\end{aligned}
\end{equation}
We check that $\mathcal{T}$ is a contraction on the space $L^{\infty} ((M, \infty), \mathbb{C}^{2n})$. To see this, 
we note that given any $z, w \in L^{\infty} ((M, \infty), \mathbb{C}^{2n})$, there exist constants
$C_3$ and $C_4$ so that  
\begin{equation*}
\begin{aligned}
|\mathcal{T} (z - w)| &\le C_3 \int_M^x e^{- \eta_1 (x-y)} e^{-\eta y} |z (y) - w(y)| dy 
+ C_4 \int_x^{\infty} e^{- \eta_2 (x-y)} e^{-\eta y} |z (y) - w(y)| dy \\
&\le \|z - w\|_{L^{\infty} ((M, \infty), \mathbb{C}^{2n})} \Big{\{} C_3 \int_M^x e^{- \eta_1 (x-y)} e^{-\eta y} dy 
+ C_4 \int_x^{\infty} e^{- \eta_2 (x-y)} e^{-\eta y} dy \Big{\}} \\
&\le \|z - w\|_{L^{\infty} ((M, \infty), \mathbb{C}^{2n})} 
\Big{\{}
C_3 \frac{e^{-\eta_1 (x - M) -\eta M} - e^{-\eta x}}{\eta - \eta_1}
+ C_4 \frac{e^{-\eta x}}{\eta - \eta_2} \Big{\}}.
\end{aligned}
\end{equation*} 
Combining terms, we see that for some constant $C_5$, 
\begin{equation*}
\|\mathcal{T} (z - w)\|_{L^{\infty} ((M, \infty), \mathbb{C}^{2n})}
\le \|z - w\|_{L^{\infty} ((M, \infty), \mathbb{C}^{2n})} C_5 e^{-\eta M},
\end{equation*}
from which it's clear that by taking $M$ sufficiently large, we can 
ensure that we have a contraction. Invariance of $\mathcal{T}$ on 
$L^{\infty} ((M, \infty), \mathbb{C}^{2n})$ follows similarly, and 
we conclude that there exists a unique
$z \in L^{\infty} ((M, \infty), \mathbb{C}^{2n})$ satifying (\ref{z-integral}).
Upon direct differentiation of (\ref{z-integral}), we see
that $z$ solves (\ref{z-equation}). Solutions to (\ref{z-equation})
are absolutely continuous, so in fact $z \in AC_{\loc} ([M, \infty), \mathbb{C}^n)$.
But then we can continue $z$ from $M$ back to 0 by standard 
ODE continuation, so that we have $z \in AC_{\loc} ([0, \infty), \mathbb{C}^n)$.

We can now substitute $z$ back into (\ref{z-integral}) to obtain 
the asymptotic estimates we're after. Proceeding similarly as 
in our verification that $\mathcal{T}$ is a contraction, we find 
that 
\begin{equation*}
z (x) = \mathbf{r}_k (\lambda) + \mathbf{O} (e^{- \eta_1 x}).
\end{equation*}

Finally, differentiability in $\lambda$ is obtained by 
differentiating (\ref{z-integral}) with respect to $\lambda$
and proceeding with a similar argument for the resulting
integral equation. 
\end{proof}

We see from Lemma \ref{ode-lemma} that for each fixed 
$\lambda \in (-\infty, \kappa)$, we can create a frame 
for the Lagrangian subspace of $L^2 ((0, \infty), \mathbb{C}^{2n})$ solutions
of (\ref{ode}), namely 
\begin{equation*}
\mathbf{X}_2 (x; \lambda) = 
\begin{pmatrix}
\mathbf{y}_1 (x; \lambda) & \mathbf{y}_2 (x; \lambda)
& \cdots & \mathbf{y}_n (x; \lambda)
\end{pmatrix}.
\end{equation*}
If we set 
\begin{equation*}
\mathcal{D} (x; \lambda) = \diag 
\begin{pmatrix} e^{\mu_1 (\lambda) x} & e^{\mu_2 (\lambda) x} & \dots & e^{\mu_n (\lambda) x}
\end{pmatrix},
\end{equation*}
then $\mathbf{X}_2 (x; \lambda)$ can be replaced by the 
frame $\mathbf{X}_2 (x; \lambda) \mathcal{D} (x; \lambda)^{-1}$. From this 
latter frame, it's clear that we can take $x \to \infty$ to 
obtain an asymptotic frame $\mathbf{X}_2^+$ comprising the eigenvectors 
$\{\mathbf{r}_k\}_{k=1}^n$ as its columns. 

We can now verify directly that 
\begin{equation*}
\sigma_{\ess} (\mathcal{L}) = [\kappa, \infty).
\end{equation*}
First, for $\lambda < \kappa$, we can directly construct a 
Green's function $G_{\lambda} (x, \xi)$ satisfying 
$\mathcal{L} G_{\lambda} (x, \xi) = \delta_{\xi} (x)$. In 
particular, we obtain 
\begin{equation*}
G_{\lambda} (x, \xi) =
\begin{cases}
X_1 (x; \lambda) \mathcal{M} (\lambda) X_2 (\xi; \lambda)^* Q(\xi) & 0 < x < \xi \\
X_2 (x; \lambda) \mathcal{M} (\lambda)^* X_1 (\xi; \lambda)^* Q(\xi) & 0 < \xi < x,
\end{cases}
\end{equation*} 
where 
\begin{equation*}
\mathcal{M} (\lambda) = - (\mathbf{X}_2 (\xi; \lambda)^* J \mathbf{X}_1 (\xi; \lambda))^{-1}.
\end{equation*}
(The verification that $\mathcal{M} (\lambda)$ is independent of $\xi$ proceeds
almost precisely as the verification that $\mathbf{X}_1 (\xi; \lambda)$ and 
$\mathbf{X}_2 (\xi; \lambda)$ are Lagrangian subspaces.)

According to Lemma 2.2 in \cite{HS2}, for $\lambda < \kappa$, 
$\mathcal{M} (\lambda)$ exists if and only if the Lagrangian 
subspaces $\ell_1 (\xi; \lambda)$ and $\ell_2 (\xi ; \lambda)$ 
do not intersect, and these Lagrangian subspaces intersect 
if and only if $\lambda$ is an eigenvalue of $\mathcal{L}$ 
(i.e., an element of the point spectrum). Moreover, for 
$\lambda < \kappa$, the frames $\mathbf{X}_1 (\xi; \lambda)$
and $\mathbf{X}_2 (\xi; \lambda)$ are analytic in $\lambda$
(see, e.g., Theorem 2.1 in \cite{Weidmann1987}, and this can
also be seen with an approach essentially identical to our 
proof of Lemma \ref{ode-lemma}). It follows that 
$\mathcal{M} (\lambda)$ is meromorphic in $\lambda < \kappa$,
and so there can be no accumulation of eigenvalues on this
interval. This allows us to conclude in fact that
for $\lambda < \kappa$, $\mathcal{M (\lambda)}$ can only fail
to exist if $\lambda \in \sigma_{\discrete} (\mathcal{L})$.

In the case that $\mathcal{M} (\lambda)$ exists, it can be shown 
(e.g., as in the proof of Proposition 7.1 in \cite{ZH})
that there exist constants $C(\lambda) > 0$, $c(\lambda) > 0$
so that
\begin{equation*}
|G_{\lambda} (x, \xi)| \le C(\lambda) e^{- c(\lambda) |x - \xi|}
\end{equation*}   
for all $0 \le x, \xi < \infty$. We can conclude that for 
any $\lambda < \kappa$ that is not an eigenvalue of $\mathcal{L}$,
the resolvent map 
\begin{equation*}
(\mathcal{L} - \lambda I)^{-1} f = \int_0^{\infty} G_{\lambda} (x, \xi) f(\xi) d\xi
\end{equation*}
defines a bounded, linear operator on $L^2 ((0,\infty), \mathbb{C}^n)$.
In particular, $(-\infty, \kappa) \cap \sigma_{\ess} (\mathcal{L}) = \emptyset$.

Although it's not required for the current analysis, we can also 
readily verify that in fact $\sigma_{\ess} (\mathcal{L}) = [\kappa, \infty)$. In 
order to see this, we first not that for any $\lambda \ge \kappa$,
the matrix $P_+^{-1} (V_+ - \lambda Q_+)$ will have one or more 
non-positive eigenvalues. It follows that $\mathbb{A}_+ (\lambda)$
will have two or more eigenvalues with zero real part. The proof
of Lemma \ref{ode-lemma} proceeds essentially unchanged in this
case, and we see that for $\lambda \ge \kappa$ the space of 
$L^2 ((0,\infty), \mathbb{C}^n)$ solutions of 
$\mathcal{L} \phi = \lambda \phi$ has dimension less than $n$. It 
follows immediately from Theorem 11.4.c of \cite{Weidmann1987}
that $\lambda \in \sigma_{\ess} (\mathcal{L})$ in these cases.

\section{The Maslov Index} \label{maslov-section}

Our framework for computing the Maslov index is adapted from 
Section 2 of \cite{HS2}, and we briefly sketch the main ideas
here. Given any pair of Lagrangian subspaces $\ell_1$ and 
$\ell_2$ with respective frames $\mathbf{X}_1 = {X_1 \choose Y_1}$
and $\mathbf{X}_2 = {X_2 \choose Y_2}$, we consider the matrix
\begin{equation} \label{tildeW}
\tilde{W} := - (X_1 + iY_1)(X_1-iY_1)^{-1} (X_2 - iY_2)(X_2+iY_2)^{-1}. 
\end{equation}
In \cite{HS2}, the authors establish: (1) the inverses 
appearing in (\ref{tildeW}) exist; (2) $\tilde{W}$ is independent
of the specific frames $\mathbf{X}_1$ and $\mathbf{X}_2$ (as long
as these are indeed frames for $\ell_1$ and $\ell_2$); (3) $\tilde{W}$ is 
unitary; and (4) the identity 
\begin{equation} \label{key}
\dim (\ell_1 \cap \ell_2) = \dim (\ker (\tilde{W} + I)).
\end{equation}
Given two continuous paths of Lagrangian subspaces 
$\ell_i: [0, 1] \to \Lambda (n)$, $i = 1, 2$, with 
respective frames $\mathbf{X}_i: [0,1] \to \mathbb{C}^{2n \times n}$,
relation (\ref{key}) allows us to compute the Maslov 
index $\mas (\ell_1, \ell_2; [0,1])$ as a spectral flow
through $-1$ for the path of matrices 
\begin{equation} 
\tilde{W} (t) := - (X_1 (t) + iY_1 (t))(X_1 (t)-iY_1 (t))^{-1} 
(X_2 (t) - iY_2 (t))(X_2 (t)+iY_2 (t))^{-1}. 
\end{equation}

In \cite{HS2}, the authors provide a rigorous definition 
of the Maslov index based on the spectral flow developed 
in \cite{P96}. Here, rather, we give only an intuitive 
discussion. As a starting point, 
if $-1 \in \sigma (\tilde{W} (t_*))$ for some $t_* \in [0, 1]$, 
then we refer to $t_*$ as a conjugate point, and its 
multiplicity is taken to be $\dim (\ell_1 (t_*) \cap \ell_2 (t_*)$, 
which by virtue of (\ref{key}) is equivalent to its 
multiplicity as an eigenvalue of $\tilde{W} (t_*)$. 
We compute the Maslov index $\mas (\ell_1, \ell_2; [0, 1])$ 
by allowing $t$ to run from $0$ to $1$ and incrementing 
the index whenever an eigenvalue crosses $-1$ in the 
counterclockwise direction, while decrementing the index
whenever an eigenvalue crosses $-1$ in the clockwise
direction. These increments/decrements are counted with 
multiplicity, so for example, if a pair of eigenvalues 
crosses $-1$ together in the counterclockwise direction, 
then a net amount of $+2$ is added to the index. Regarding
behavior at the endpoints, if an eigenvalue of $\tilde{W}$
rotates away from $-1$ in the clockwise direction as $t$ increases
from $0$, then the Maslov index decrements (according to 
multiplicity), while if an eigenvalue of $\tilde{W}$
rotates away from $-1$ in the counterclockwise direction as $t$ increases
from $0$, then the Maslov index does not change. Likewise, 
if an eigenvalue of $\tilde{W}$ rotates into $-1$ in the 
counterclockwise direction as $t$ increases
to $1$, then the Maslov index increments (according to 
multiplicity), while if an eigenvalue of $\tilde{W}$
rotates into $-1$ in the clockwise direction as $t$ increases
to $1$, then the Maslov index does not change. Finally, 
it's possible that an eigenvalue of $\tilde{W}$ will arrive 
at $-1$ for $t = t_*$ and stay. In these cases, the 
Maslov index only increments/decrements upon arrival or 
departure, and the increments/decrements are determined 
as for the endpoints (departures determined as with $t=0$,
arrivals determined as with $t = 1$).

One of the most important features of the Maslov index is homotopy invariance, 
for which we need to consider continuously varying families of Lagrangian 
paths. To set some notation, we denote by $\mathcal{P} (\mathcal{I})$ the collection 
of all paths $\mathcal{L} (t) = (\ell_1 (t), \ell_2 (t))$, where 
$\ell_1, \ell_2: \mathcal{I} \to \Lambda (n)$ are continuous paths in the 
Lagrangian--Grassmannian. We say that two paths 
$\mathcal{L}, \mathcal{M} \in \mathcal{P} (\mathcal{I})$ are homotopic provided 
there exists a family $\mathcal{H}_s$ so that 
$\mathcal{H}_0 = \mathcal{L}$, $\mathcal{H}_1 = \mathcal{M}$, 
and $\mathcal{H}_s (t)$ is continuous as a map from $(t,s) \in \mathcal{I} \times [0,1]$
into $\Lambda (n) \times \Lambda (n)$. 
 
The Maslov index has the following properties. 

\medskip
\noindent
{\bf (P1)} (Path Additivity) If $\mathcal{L} \in \mathcal{P} (\mathcal{I})$
and $a, b, c \in \mathcal{I}$, with $a < b < c$, then 
\begin{equation*}
\mas (\mathcal{L};[a, c]) = \mas (\mathcal{L};[a, b]) + \mas (\mathcal{L}; [b, c]).
\end{equation*}

\medskip
\noindent
{\bf (P2)} (Homotopy Invariance) If $\mathcal{L}, \mathcal{M} \in \mathcal{P} (\mathcal{I})$ 
are homotopic, with $\mathcal{L} (a) = \mathcal{M} (a)$ and  
$\mathcal{L} (b) = \mathcal{M} (b)$ (i.e., if $\mathcal{L}, \mathcal{M}$
are homotopic with fixed endpoints) then 
\begin{equation*}
\mas (\mathcal{L};[a, b]) = \mas (\mathcal{M};[a, b]).
\end{equation*} 

Straightforward proofs of these properties appear in \cite{HLS1}
for Lagrangian subspaces of $\mathbb{R}^{2n}$, and proofs in the current setting of 
Lagrangian subspaces of $\mathbb{C}^{2n}$ are essentially identical.

\subsection{Exchanging Target Spaces} \label{target-exchange}

Suppose we have a continuous path of Lagrangian subspaces 
$\ell: [0,1] \to \Lambda (n)$, along with two fixed target
Lagrangian subspaces $\ell_1$ and $\ell_2$. Our goal in this
section is to relate the two Maslov indices 
$\mas (\ell, \ell_1; [0,1])$ and $\mas (\ell, \ell_2; [0,1])$.
This question goes back at least to H\"ormander \cite{Hor71},
and has also been discussed in our primary references 
\cite{Duis76} and \cite{ZWZ2018}. 

We suppose $\ell(0)$ intersects neither $\ell_1$ nor 
$\ell_2$ and likewise $\ell(1)$ intersects neither $\ell_1$ nor $\ell_2$,
and we also suppose $\ell_1 \cap \ell_2 = \{0\}$. Then the difference
\begin{equation*}
\mas (\ell, \ell_2; [0,1]) - \mas (\ell, \ell_1; [0,1]),
\end{equation*}
does not depend on the specific path $\ell: [0,1] \to \Lambda (n)$
(see, e.g., \cite{Duis76, Hor71, ZWZ2018}, as discussed below),
and we define the {\it H\"ormander index}
$s (\ell_1, \ell_2; \ell (0), \ell (1))$ by the relation
\begin{equation} \label{hormander-index}
s (\ell_1, \ell_2; \ell (0), \ell (1))
: = \mas (\ell, \ell_2; [0,1]) - \mas (\ell, \ell_1; [0,1]).
\end{equation}
With slight adjustments for notation, this is equation (2.9)
in \cite{Duis76} and Definition 3.9 in \cite{ZWZ2018}. We 
will evaluate the H\"ormander index with an expression 
from \cite{Hor71}, and for this we need to define an 
associated bilinear form. 

\begin{definition} \label{Hormander_Q-form}
Fix any $\ell_1, \ell_2 \in \Lambda (n)$ with $\ell_1 \cap \ell_2 = \{0\}$.
Then any $n$-dimensional linear subspace $\ell_0 \subset \mathbb{C}^{2n}$
(i.e., $\ell_0$ not necessarily Lagrangian) with $\ell_0 \cap \ell_2 = \{0\}$
can be expressed as 
\begin{equation*}
\ell_0 = \{u + Cu: u \in \ell_1\}
\end{equation*}
for some $2n \times 2n$ matrix $C$ that maps $\ell_1$ to $\ell_2$. We 
define a bilinear form 
\begin{equation*}
Q = Q(\ell_1, \ell_2; \ell_0): \ell_1 \times \ell_1 \to \mathbb{C}
\end{equation*}
by the relation 
\begin{equation*}
Q (u, v) := (JCu, v),
\end{equation*}
for all $u, v \in \ell_1$. 
\end{definition}

\begin{remark} Although we will only utilize the bilinear forms 
$Q$ in combination, it's worth noting how we should interpret the 
meaning of an individual form. Given three Lagrangian subspaces 
$\ell_0$, $\ell_1$, and $\ell_2$, $Q (\ell_1, \ell_2; \ell_0)$
provides information about the relative orientation of these 
three spaces. For the case $n=1$, the nature of this information 
is particularly clear. In that setting, we can associate to 
any Lagrangian subspace $\ell_j$ with frame $\mathbf{X}_j = {X_j \choose Y_j}$
a unique point on $S^1$,
\begin{equation*}
\tilde{W}^D_j = (X_j + i Y_j) (X_j - i Y_j)^{-1}.
\end{equation*}
If $\ell_1 \cap \ell_2 = \{0\}$, then $\tilde{W}^D_1$ and 
$\tilde{W}_2^D$ correspond with distinct points on $S^1$. 
Given any third Lagrangian plane $\ell_0$ distinct 
from both $\ell_1$ and $\ell_2$, $\tilde{W}_0^D$ will 
lie either on the arc going from $\tilde{W}^D_1$ 
to $\tilde{W}^D_2$ in the clockwise direction or 
on the arc going from $\tilde{W}^D_1$ to $\tilde{W}^D_2$
in the counterclockwise direction. In the former case, 
we will have $\sgn Q (\ell_1, \ell_2; \ell_0) = +1$, 
while in the latter case we will have 
$\sgn Q (\ell_1, \ell_2; \ell_0) = -1$. Using this
observation, we can readily derive H\"ormander's
formula ((\ref{hormander-formula}), just below) for 
the case $n = 1$, and it can subsequently be 
established that (\ref{hormander-formula}) is valid
for $n>1$ as well.  
\end{remark}

H\"ormander's Q-form is precisely the form defined in \cite{Duis76}, and aside
from a sign convention is also the same form specified in Section 3.1
of \cite{ZWZ2018}. Suppose  $\ell(0)$ intersects neither $\ell_1$ nor 
$\ell_2$ and likewise $\ell(1)$ intersects neither $\ell_1$ nor $\ell_2$.
Then if $\ell_1 \cap \ell_2 = \{0\}$, H\"ormander's formula for 
$s (\ell_1, \ell_2; \ell (0), \ell (1))$ can be expressed as 
\begin{equation} \label{hormander-formula}
s (\ell_1, \ell_2; \ell (0), \ell (1))
= \frac{1}{2} \Big(\sgn Q (\ell_1, \ell_2; \ell (0)) - \sgn Q (\ell_1, \ell_2; \ell(1))
\Big),
\end{equation}
where $\sgn (\cdot)$ denotes the usual signature of a bilinear form (number of positive
eigenvalues minus the number of negative eigenvalues).
    
One immediate consequence of this formula is that if 
$\ell: [0,1] \to \Lambda (n)$ is a closed path (i.e., 
$\ell(0) = \ell (1)$), then 
$s (\ell_1, \ell_2; \ell (0), \ell (1)) = 0$. We see that 
if $\ell_1 \cap \ell_2 = \{0\}$, then for any closed path 
so that $\ell (0)$ intersects neither $\ell_1$ nor $\ell_2$
and likewise $\ell(1)$ intersects neither $\ell_1$ nor $\ell_2$,
the target space can be changed from $\ell_1$ to $\ell_2$
without affecting the Maslov index. 

In practice, we would often prefer the Dirichlet plane $\ell_D$
as our target (e.g., when the target is Dirichlet, all crossings
will necessarily be in the same direction), and so let's check 
the calculation associated with exchanging a general Lagrangian 
target space $\ell_G$ with the Dirichlet plane. For notational 
convenience, we will think of this the other way around, 
taking $\ell_1 = \ell_D$ and $\ell_2 = \ell_G$ in our general
formulation. Following our general development, we assume 
$\ell_D \cap \ell_G = \{0\}$, and also that $\ell (0)$ intersects 
neither $\ell_D$ nor $\ell_G$ and likewise $\ell(1)$ intersects 
neither $\ell_D$ nor $\ell_G$. Since the analysis of $\ell (0)$
and $\ell (1)$ are the same, we will proceed with each replaced 
by the general notation $\ell_0$. 

Our starting point is to characterize the maps $C: \ell_D \to \ell_G$.
If $u \in \ell_D$, then $u = {0 \choose u_2}$ for some 
$u_2 \in \mathbb{C}^n$, and consequently 
$Cu = {C_{12} \choose C_{22}} u_2$. In particular, if 
$C$ maps onto $\ell_G$, then ${C_{12} \choose C_{22}}$ 
will be a frame for $\ell_G$. We denote the set of all 
such maps $C$ by $\mathcal{C}$. Next, we must be able 
to find some $C^{(0)} \in \mathcal{C}$ so that given any 
$w \in \ell_0$ there will exist $u \in \ell_D$ so that 
$w = u + C^{(0)} u$. I.e., we must have $w = {0 \choose u_2} 
+ {C^{(0)}_{12} \choose C^{(0)}_{22}} u_2$. Under our assumption 
that $\ell_D \cap \ell_0 = \{0\}$, we can only 
have ${C^{(0)}_{12} \choose C^{(0)}_{22}} u_2 = 0$ if $u_2$
is 0, and so ${C^{(0)}_{12} \choose C^{(0)}_{22}}$ is indeed
a frame for $\ell_G$. 

For $u \in \ell_D$, we can now compute 
\begin{equation*}
Q (\ell_D, \ell_G; \ell_0) (u,u) = (J C^{(0)} u, u)_{\mathbb{C}^{2n}}
= (C^{(0)}_{12} u_2, u_2)_{\mathbb{C}^n},  
\end{equation*}
from which it's clear that 
\begin{equation*}
\sgn Q (\ell_D, \ell_G; \ell_0) = \sgn C^{(0)}_{12},
\end{equation*}
and moreover if $C^{(0)}_{12}$ is invertible 
\begin{equation*}
\sgn Q (\ell_D, \ell_G; \ell_0) = \sgn (C^{(0)}_{12})^{-1}.
\end{equation*} 
Since ${C^{(0)}_{12} \choose C^{(0)}_{22}}$ is a frame for 
$\ell_G$, we must have that for any other frame 
$\mathbf{X}_G = {X_G \choose Y_G}$, there exists an invertible 
matrix $M \in \mathbb{C}^{n \times n}$ so that 
\begin{equation*}
{C^{(0)}_{12} \choose C^{(0)}_{22}} = {X_G \choose Y_G} M.
\end{equation*}
Likewise, if $\mathbf{X}_0 = {X_0 \choose Y_0}$ is any 
frame for $\ell_0$, then any other frame for $\ell_0$ 
can be expressed as ${X_0 \choose Y_0}\mathcal{M}$ for 
some invertible matrix $\mathcal{M} \in \mathbb{C}^{n \times n}$.
In this way, we can express the relation 
\begin{equation*}
\ell_0 = \{u+Cu: u\in \ell_D\}
\end{equation*} 
in terms of frames 
\begin{equation} \label{matrix_sys}
{X_0 \choose Y_0}\mathcal{M} = {0 \choose I} + {C^{(0)}_{12} \choose C^{(0)}_{22}}
= {0 \choose I} + {X_G \choose Y_G} M.
\end{equation}

First, if $X_G$ is invertible (as will be the case in the current analysis),
we can write $M = X_G^{-1} X_0 \mathcal{M}$, and subsequently 
\begin{equation*}
Y_0 \mathcal{M} = I + Y_G X_G^{-1} X_0 \mathcal{M}. 
\end{equation*}
I.e., we have $(Y_0 - Y_G X_G^{-1} X_0) \mathcal{M} = I$, from 
which we see that $Y_0 - Y_G X_G^{-1} X_0$ is the inverse of 
$\mathcal{M}$, and so $\mathcal{M} = (Y_0 - Y_G X_G^{-1} X_0)^{-1}$.
We conclude, 
\begin{equation} \label{c120-simple}
C_{12}^{(0)} = X_0 \mathcal{M} = X_0 (Y_0 - Y_G X_G^{-1} X_0)^{-1}.
\end{equation}

\begin{remark} \label{invertible}
We note that in the event that $X_0$ is also invertible, we obtain the
expression 
\begin{equation} \label{C120both}
C_{12}^{(0)} = (Y_0 X_0^{-1} - Y_G X_G^{-1})^{-1},
\end{equation}
so that 
\begin{equation*}
\sgn C_{12}^{(0)} = \sgn (C_{12}^{(0)})^{-1} = \sgn (Y_0 X_0^{-1} - Y_G X_G^{-1}).
\end{equation*}
\end{remark}

On the other hand, suppose $X_G$ is not invertible. In this case, we 
observe that the system (\ref{matrix_sys}) can be combined into the 
form 
\begin{equation} \label{matrix_sys2}
\begin{aligned}
(X_0 + i Y_0) \mathcal{M} &= (X_G + i Y_G) M + iI \\
(X_0 - i Y_0) \mathcal{M} &= (X_G - i Y_G) M - iI
\end{aligned}.
\end{equation}
The advantage of this formulation is simply that we can be 
sure that every matrix is invertible. In this case, we obtain 
from the first equation in (\ref{matrix_sys2}),
\begin{equation*}
\mathcal{M} = (X_0 + i Y_0)^{-1} \{(X_G + i Y_G) M + iI\},
\end{equation*} 
and upon substitution into the second equation in (\ref{matrix_sys2}),
\begin{equation*}
(X_0 - i Y_0) (X_0 + i Y_0)^{-1} \{(X_G + i Y_G) M + iI\} 
= (X_G - i Y_G) M - iI.
\end{equation*}
Rearranging terms, we can write 
\begin{equation*}
\begin{aligned}
\Big[(X_0 - i Y_0) (X_0 + i Y_0)^{-1} &- (X_G - i Y_G) (X_G + i Y_G)^{-1}\Big] (X_G + i Y_G) M \\
&= - i [(X_0 - i Y_0) (X_0 + i Y_0)^{-1} + I].
\end{aligned}
\end{equation*}
By assumption, $\ell_0$ does not intersect $\ell_G$, and so the matrix 
multiplying $M$ on the left (i.e., the entire matrix, including 
the square brackets) must be invertible. In this way, we arrive at 
\begin{equation*}
M = - i (X_G + i Y_G)^{-1}
[\tilde{W}_0 - \tilde{W}_G]^{-1} 
[\tilde{W}_0 + I],
\end{equation*}
where 
\begin{equation*}
\begin{aligned}
\tilde{W}_0 &:= (X_0 - i Y_0) (X_0 + i Y_0)^{-1} \\
\tilde{W}_G &:= (X_G - i Y_G) (X_G + i Y_G)^{-1}.
\end{aligned}
\end{equation*}
In this case, 
\begin{equation} \label{c120}
C_{12}^{(0)} = X_0 M = 
-i X_0 (X_G + i Y_G)^{-1}
[\tilde{W}_0 - \tilde{W}_G]^{-1} 
[\tilde{W}_0 + I]. 
\end{equation}

We summarize these observations in the following lemma.

\begin{lemma} \label{hormander-lemma} 
Fix any $\ell_G \in \Lambda (n)$, and let $\ell_0 \in \Lambda (n)$
be such that $\ell_0 \cap \ell_D = \{0\}$ and 
$\ell_0 \cap \ell_G = \{0\}$. Then 
\begin{equation*}
\sgn Q (\ell_D, \ell_G; \ell_0) = \sgn C_{12}^{(0)},
\end{equation*}
where $C_{12}^{(0)}$ is specified in (\ref{c120}).
Moreover, if $X_G$ is invertible, then $C_{12}^{(0)}$
is given in (\ref{c120-simple}), and if in addition
$X_0$ is invertible then $C_{12}^{(0)}$ is given in (\ref{C120both}).
\end{lemma}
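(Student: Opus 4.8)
The plan is to assemble the statement from the computations carried out in Section~\ref{target-exchange} just above. Working under the running hypothesis $\ell_D \cap \ell_G = \{0\}$ of that subsection (needed for the form in Definition~\ref{Hormander_Q-form} to make sense), take $\ell_1 = \ell_D$ and $\ell_2 = \ell_G$. Since $\ell_0 \cap \ell_G = \{0\}$, Definition~\ref{Hormander_Q-form} lets us write $\ell_0 = \{u + C^{(0)} u : u \in \ell_D\}$ for some $C^{(0)} \in \mathcal{C}$, and the first step is to record the evaluation
\begin{equation*}
Q(\ell_D, \ell_G; \ell_0)(u,u) = (J C^{(0)} u, u)_{\mathbb{C}^{2n}} = (C_{12}^{(0)} u_2, u_2)_{\mathbb{C}^n}, \qquad u = {0 \choose u_2} \in \ell_D .
\end{equation*}
Because the signature of a H\"ormander $Q$-form is a well-defined integer (i.e., $Q$ is a Hermitian form when $\ell_0,\ell_1,\ell_2$ are all Lagrangian; see \cite{Duis76, Hor71}), the matrix $C_{12}^{(0)}$ is Hermitian, and the linear isomorphism $\ell_D \to \mathbb{C}^n$, $u \mapsto u_2$, carries $Q(\ell_D, \ell_G; \ell_0)$ to the Hermitian form on $\mathbb{C}^n$ represented by $C_{12}^{(0)}$. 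This gives $\sgn Q(\ell_D, \ell_G; \ell_0) = \sgn C_{12}^{(0)}$, the first assertion.

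It then remains to produce the three displayed formulas for $C_{12}^{(0)}$. Fixing frames $\mathbf{X}_0 = {X_0 \choose Y_0}$ for $\ell_0$ and $\mathbf{X}_G = {X_G \choose Y_G}$ for $\ell_G$, the conditions ``$C^{(0)} \in \mathcal{C}$'' and ``$\ell_0 = \{u + C^{(0)} u : u \in \ell_D\}$'' become the matrix identity (\ref{matrix_sys}), with $M$ invertible by the description of $\mathcal{C}$ and $\mathcal{M}$ invertible as a change of frame for $\ell_0$. For the general formula I would rewrite (\ref{matrix_sys}) as the combined system (\ref{matrix_sys2}); there every matrix of the form $X_j \pm i Y_j$ with $\ell_j$ Lagrangian is invertible by item~(1) of the list following (\ref{tildeW}), while $\tilde{W}_0 - \tilde{W}_G$ is invertible precisely because $\ell_0 \cap \ell_G = \{0\}$ (the argument given just before the lemma, which rests on (\ref{key})). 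Solving the first equation of (\ref{matrix_sys2}) for $\mathcal{M}$, substituting into the second, and isolating $M$ yields the displayed $M$ and hence $C_{12}^{(0)} = X_0 M$ as in (\ref{c120}). When $X_G$ is invertible one works instead directly with (\ref{matrix_sys}): $M = X_G^{-1} X_0 \mathcal{M}$ reduces the second block to $(Y_0 - Y_G X_G^{-1} X_0) \mathcal{M} = I$, so $Y_0 - Y_G X_G^{-1} X_0 = \mathcal{M}^{-1}$, producing (\ref{c120-simple}); if $X_0$ is also invertible, factoring out $X_0$ and using the Lagrangian relations $Y_0 X_0^{-1} = (X_0^{-1})^* Y_0^*$ and $Y_G X_G^{-1} = (X_G^{-1})^* Y_G^*$ recasts this as (\ref{C120both}), which incidentally exhibits $C_{12}^{(0)}$ as the inverse of a difference of Hermitian matrices.

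Since the lemma is a bookkeeping summary, there is no deep obstacle; the points needing care are (i) observing that although the $2n \times 2n$ matrix $C$ in Definition~\ref{Hormander_Q-form} is not unique, the block $C_{12}^{(0)}$ is determined by the triple $(\ell_0, \ell_D, \ell_G)$ and is independent of the chosen frames, so that $\sgn C_{12}^{(0)}$ is a legitimate invariant; (ii) tracking which of the transversality conditions $\ell_0 \cap \ell_D = \{0\}$ and $\ell_0 \cap \ell_G = \{0\}$ underlies each matrix inversion (in particular, $\ell_0 \cap \ell_G = \{0\}$ is what allows $\ell_0$ to be written as $\{u + C^{(0)} u : u \in \ell_D\}$ in the first place); and (iii) the one genuine external input, that the H\"ormander form is Hermitian so that $\sgn Q$ and the identification $\sgn Q = \sgn C_{12}^{(0)}$ are meaningful — this is standard (\cite{Duis76, Hor71}) and, in the cases covered by (\ref{c120-simple}) and (\ref{C120both}), also follows directly from the Lagrangian identities $X_j^* Y_j = Y_j^* X_j$.
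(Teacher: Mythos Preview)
Your proposal is correct and is, by design, the same approach as the paper's: the lemma is introduced in the paper with the sentence ``We summarize these observations in the following lemma,'' so its proof \emph{is} the preceding computation in Section~\ref{target-exchange}, which you faithfully reassemble (the evaluation $Q(u,u)=(C_{12}^{(0)}u_2,u_2)$, the frame identity (\ref{matrix_sys}), the two reductions leading to (\ref{c120-simple}) and (\ref{C120both}), and the general case via (\ref{matrix_sys2})). Your added remarks on Hermiticity of $Q$ and frame-independence of $C_{12}^{(0)}$ are not made explicit in the paper but are correct and harmless embellishments.
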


\begin{remark}
We will use these considerations in Section \ref{proof+} to 
establish Corollary \ref{corollary+}.  
\end{remark}

\section{Proof of Theorem \ref{target0}} \label{proof0}

In this section, we prove Theorem \ref{target0}. Our starting point
is to verify that $\mathbf{X}_1 (0; \lambda)$ and $\mathbf{X}_2 (x; \lambda)$
are indeed frames for Lagrangian subspaces. According to Proposition 2.1 
of \cite{HS2}, a matrix $\mathbf{X} \in \mathbb{C}^{2n \times n}$ is 
the frame for a Lagrangian subspace if and only if the following 
two conditions both hold: (1) $\operatorname{rank} (\mathbf{X}) = n$; 
and (2) $\mathbf{X}^* J \mathbf{X} = 0$.

For $\mathbf{X}_1 (0; \lambda)$, we 
have $\mathbf{X}_1 (0; \lambda) = J \alpha^*$. According to 
{\bf (A3)}, $\operatorname{rank} \alpha = n$, and it follows 
immediately that $\operatorname{rank} J \alpha^* = n$. Moreover,  
\begin{equation*}
\mathbf{X}_1 (0; \lambda)^* J \mathbf{X}_1 (0; \lambda)
= (\alpha J^*) J (J \alpha^*) = \alpha J \alpha^* = 0.
\end{equation*} 
For $\mathbf{X}_2 (x; \lambda)$, we fix $\lambda \in (-\infty, \kappa)$ 
and temporarily set $\mathcal{A} (x) := \mathbf{X}_2 (x; \lambda)^* J \mathbf{X}_2 (x; \lambda)$.  
(Our notation here doesn't assert that $\mathcal{A}$ is independent of 
$\lambda$, but rather that $\lambda$ is fixed in the ensuing calculations.)
Since $\mathbf{X}_2 (\cdot; \lambda) \in L^2 ((0,\infty), \mathbb{C}^{2n \times n})$,
we see that 
\begin{equation*}
\lim_{x \to + \infty} \mathcal{A} (x) = 0.
\end{equation*}
In addition, we can compute directly to find, 
\begin{equation*}
\begin{aligned}
\mathcal{A}' (x) &= \mathbf{X}_2' (x; \lambda)^* J \mathbf{X}_2 (x; \lambda)
+ \mathbf{X}_2 (x; \lambda)^* J \mathbf{X}_2' (x; \lambda) \\
& = - (J \mathbf{X}_2' (x; \lambda))^* \mathbf{X}_2 (x; \lambda)
+ \mathbf{X}_2 (x; \lambda)^* J \mathbf{X}_2' (x; \lambda) \\
& = - (\mathbb{B} (x; \lambda) \mathbf{X}_2 (x; \lambda))^* \mathbf{X}_2 (x; \lambda)
+ \mathbf{X}_2 (x; \lambda)^* \mathbb{B} (x; \lambda) \mathbf{X}_2 (x; \lambda) \\
&= 0,
\end{aligned}
\end{equation*}
where in obtaining the final equality to $0$ we have used the fact that $\mathbb{B} (x; \lambda)$
is self-adjoint. Combining these observations, we can conclude that 
$\mathcal{A} (x) \equiv 0$ on $[0, \infty)$. Since this argument holds for 
any $\lambda \in (-\infty, \kappa)$, we conclude that $\mathbf{X}_2 (x; \lambda)$
is the frame for a Lagrangian subspace for any 
$(x, \lambda) \in [0, \infty) \times (-\infty, \kappa)$.

Finally, we recall from Section \ref{ode_section} that the Lagrangian 
subspaces $\ell_2 (x; \lambda)$ with frames $\mathbf{X}_2 (x; \lambda)$
can be extended as $x$ tends to infinity to the Lagrangian subspaces $\ell_2^+ (\lambda)$ 
with frames $\mathbf{X}_2^+ (\lambda) = {R (\lambda) \choose P_+ R(\lambda) D(\lambda)}$.
Here, $R(\lambda)$ and $D(\lambda)$ are specified respectively in 
(\ref{R-defined}) and (\ref{D-defined}). In order to verify that $\ell_2^+ (\lambda)$
is indeed Lagrangian, we compute 
\begin{equation*}
\begin{aligned}
\mathbf{X}_2^+ (\lambda)^* J \mathbf{X}_2^+ (\lambda) 
&= 
\begin{pmatrix}
R(\lambda)^* & (P_+ R(\lambda) D(\lambda))^* 
\end{pmatrix}
\begin{pmatrix}
- P_+ R(\lambda) D(\lambda) \\ R(\lambda) 
\end{pmatrix} \\
&= - R(\lambda)^* P_+ R(\lambda) D(\lambda) + D(\lambda) R(\lambda)^* P_+ R(\lambda),
\end{aligned}
\end{equation*}
where we have observed that $P_+$ and $D(\lambda)$ are self-adjoint. 
Recalling the normalization identity $R(\lambda)^* P_+ R(\lambda) = I$,
we see that $\mathbf{X}_2^+ (\lambda)^* J \mathbf{X}_2^+ (\lambda) = 0$
for all $\lambda < \kappa$, from which we can conclude that 
$\ell_2^+ (\lambda)$ is Lagrangian.

We proceed now by considering the {\it Maslov box}, for which we fix 
$\lambda_0 < \kappa$, and work with a value $\lambda_{\infty}$ 
that will be chosen sufficiently large
during the proof, and certainly large enough so that 
$-\lambda_{\infty} < \lambda_0$. The Maslov box in this case will refer to the 
following sequence of four lines, creating a rectangle 
in the $(\lambda, x)$-plane: we fix $x = 0$ and let 
$\lambda$ run from $-\lambda_{\infty}$ 
to $\lambda_0$ (the bottom shelf); we fix 
$\lambda = \lambda_0$ and let $x$ run from $0$ to $+\infty$
(the right shelf); we fix $x = + \infty$ and let 
$\lambda$ run from $\lambda_0$ to $-\lambda_{\infty}$ (the 
top shelf); and we fix $\lambda = -\lambda_{\infty}$
and let $x$ run from $+\infty$ to $0$ (the left shelf).

For Theorem \ref{target0}, we view the bottom shelf at 
$x = 0$ as our target, and the Lagrangian subspace we 
associate with the target is $\ell_1 (0; \lambda)$, with frame 
$\mathbf{X}_1 (0; \lambda) = J \alpha^*$. Clearly, 
$\ell_1 (0; \lambda)$ does not depend on $\lambda$, 
and $\lambda$ only appears as an argument for 
notational consistency. In this case, the evolving 
Lagrangian subspace is $\ell_2 (x; \lambda)$,
which we recall corresponds with the space of solutions that
decay as $x \to + \infty$. As our frame for $\ell_2 (x; \lambda)$,
we use the matrix $\mathbf{X}_2 (x; \lambda)$ constructed 
in (\ref{frame2}). We set 
\begin{equation} \label{tildeW0}
\begin{aligned}
\tilde{W} (x; \lambda) 
&= - (X_1 (0; \lambda) + i Y_1 (0; \lambda)) (X_1 (0; \lambda) - i Y_1 (0; \lambda))^{-1} \\
& \times (X_2 (x; \lambda) - i Y_2 (x; \lambda)) (X_2 (x; \lambda) + i Y_2 (x; \lambda))^{-1}.  
\end{aligned}
\end{equation}
The Maslov index computed with $\tilde{W} (x; \lambda)$ will detect 
intersections between $\ell_1 (0;\lambda)$ and $\ell_2 (x; \lambda)$.
For expositional convenience, we consider the sides of the Maslov
box in the following order: bottom, top, left, right. 

{\it Bottom shelf.}
Beginning with the bottom shelf, we observe that our Lagrangian subspaces
have been constructed in such a way that conjugate points 
correspond with eigenvalues of $\mathcal{L}$, with the multiplicity 
of $\lambda$ as an eigenvalue of $\mathcal{L}$ matching the 
multiplicity of the intersection. This means that if each conjugate 
point along the bottom shelf has the same direction then the Maslov
index along the bottom shelf will be (up to a sign) a count of the 
total number of eigenvalues that $\mathcal{L}$ has between 
$-\lambda_{\infty}$ and $\lambda_0$. We will show below that as 
$\lambda$ ranges from $-\lambda_{\infty}$ toward $\lambda_0$
on the bottom shelf, the conjugate points are all negatively 
directed, and so the corresponding Maslov index is a negative of
this count. In addition, we will 
show during our discussion of the left shelf that we can choose 
$\lambda_{\infty}$ sufficiently large so that $\mathcal{L}$ has 
no eigenvalues on the interval $(-\infty, -\lambda_{\infty}]$. 
We will be able to conclude, then, that the Maslov index along
the bottom shelf is negative a count of the total number of 
eigenvalues, including multiplicity, that $\mathcal{L}$ has
below $\lambda_0$; i.e., 
\begin{equation} \label{bottom-shelf}
\mas (\ell_1 (0; \cdot), \ell_2 (0; \cdot); [-\lambda_{\infty}, \lambda_0])
= - \mor (\mathcal{L}; \lambda_0).
\end{equation}

According to Lemma 3.1 of \cite{HS2} (also Lemma 4.2 of 
\cite{HLS1}), rotation of the eigenvalues of 
$\tilde{W} (x; \lambda)$ as $\lambda$ varies---for any fixed 
$x \in [0, \infty)$---can be determined from the 
matrix $\mathbf{X}_2 (x; \lambda)^* J \partial_{\lambda} \mathbf{X}_2 (x; \lambda)$
in the following sense: If this matrix is positive definite
at some point $(x_0, \lambda_0)$, then as $\lambda$ increases
through $\lambda_0$ (with $x = x_0$ fixed), all $n$ eigenvalues of 
$\tilde{W} (x; \lambda)$ will monotonically rotate in the 
counterclockwise direction. 

For this calculation, we temporarily set 
\begin{equation*}
\mathcal{B} (x; \lambda) 
= \mathbf{X}_2 (x; \lambda)^* J \partial_{\lambda} \mathbf{X}_2 (x; \lambda),
\end{equation*}
for which we can compute (with prime denoting differentiation with 
respect to $x$) 
\begin{equation*}
\begin{aligned}
\mathcal{B}' (x; \lambda)
&= \mathbf{X}_2' (x; \lambda)^* J \partial_{\lambda} \mathbf{X}_2 (x; \lambda)
+ \mathbf{X}_2 (x; \lambda)^* J \partial_{\lambda} \mathbf{X}_2' (x; \lambda) \\
&= - (J \mathbf{X}_2' (x; \lambda))^* \partial_{\lambda} \mathbf{X}_2 (x; \lambda)
+ \mathbf{X}_2 (x; \lambda)^* \partial_{\lambda} (J \mathbf{X}_2' (x; \lambda)) \\
&= - (\mathbb{B} (x; \lambda) \mathbf{X}_2 (x; \lambda))^* \partial_{\lambda} \mathbf{X}_2 (x; \lambda)
+ \mathbf{X}_2 (x; \lambda)^* \partial_{\lambda} (\mathbb{B} (x; \lambda) \mathbf{X}_2 (x; \lambda)) \\
& = - \mathbf{X}_2 (x; \lambda)^* \mathbb{B} (x; \lambda)  \partial_{\lambda} \mathbf{X}_2 (x; \lambda)
+ \mathbf{X}_2 (x; \lambda)^* \mathbb{B}_{\lambda} (x; \lambda) \mathbf{X}_2 (x; \lambda) \\
& \quad + \mathbf{X}_2 (x; \lambda)^* \mathbb{B} (x; \lambda) \partial_{\lambda} \mathbf{X}_2 (x; \lambda) \\
& = \mathbf{X}_2 (x; \lambda)^* \mathbb{B}_{\lambda} (x; \lambda) \mathbf{X}_2 (x; \lambda).  
\end{aligned}
\end{equation*}
Integrating, we see that 
\begin{equation*}
\mathcal{B} (x; \lambda) 
= - \int_x^{+\infty} \mathbf{X}_2 (y; \lambda)^* \mathbb{B}_{\lambda} (y; \lambda) \mathbf{X}_2 (y; \lambda) dy,
\end{equation*}
where convergence of the integral is assured by the exponential decay of the elements
in our frame $\mathbf{X}_2$. In this case, 
\begin{equation*}
\mathbb{B}_{\lambda} (x; \lambda)
= 
\begin{pmatrix}
Q(x) & 0 \\
0 & 0
\end{pmatrix},
\end{equation*}
so that 
\begin{equation*}
\mathcal{B} (x; \lambda) 
= - \int_x^{+\infty} X_2 (y; \lambda)^* Q(y) X_2 (y; \lambda) dy.
\end{equation*}
This matrix is clearly non-positive (since $Q$ is positive definite), 
and moreover it cannot have $0$ as an eigenvalue, because the associated
eigenvector $v \in \mathbb{C}^n$ would necessarily satisfy 
$X_2 (y; \lambda) v = 0$ for all $y \in [x,\infty)$, and this would contradict
linear independence of the columns of $X_2 (y; \lambda)$ (as solutions of 
(\ref{sturm})). 

Since $\mathcal{B} (x; \lambda)$ is negative definite, we can conclude 
that as $\lambda$ increases, the eigenvalues of $\tilde{W} (x; \cdot)$
rotative monotonically clockwise. It follows immediately that for the bottom 
shelf, (\ref{bottom-shelf}) holds. 

{\it Top shelf.} For the top shelf (obtained in the limit as $x \to + \infty$), 
we set
\begin{equation*}
\tilde{W}^+ (\lambda) := \lim_{x \to + \infty} \tilde{W} (x; \lambda),
\end{equation*}
and note that $\tilde{W}^+ (\lambda)$ detects intersections between 
$\ell_1 (0; \lambda)$ and $\ell_2^+ (\lambda)$.  Our frames
for these Lagrangian subspaces are explicit, 
$\mathbf{X}_1 (0; \lambda) = J \alpha^*$ and 
$\mathbf{X}_2^+ (\lambda) = {R (\lambda) \choose P_+ R (\lambda) D (\lambda)}$,
and we can use these frames to explicitly compute 
$\mas (\ell_1 (0; \cdot), \ell_2^+ (\cdot); [-\lambda_{\infty}, \lambda_0])$.

We observe that the monotonicity that we found along horizontal 
shelves does not immediately carry over to the top shelf
(since that calculation is only valid for $x \in [0,\infty)$).
Nonetheless, we can conclude monotonicity along the top shelf
in the following way: by continuity of our frames, we know that as 
$\lambda$ increases along the top shelf the eigenvalues of 
$\tilde{W}^+ (\lambda)$ cannot rotate in the counterclockwise
direction. Moreover, eigenvalues of $\tilde{W}^+ (\lambda)$ 
cannot remain at $-1$ for any interval of $\lambda$ 
values. In order to clarify this last statement, we observe 
that the Lagrangian subspaces $\ell_1 (0; \lambda)$
and $\ell_2^+ (\lambda)$ intersect if and only if $\lambda$
is an eigenvalue for the constant coefficient equation 
\begin{equation} \label{sturm-cc}
\begin{aligned}
- P_+ \phi'' + V_+ \phi &= \lambda Q_+ \phi \\
\alpha_1 \phi (0) + \alpha_2 P (0) \phi' (0) &= 0.  
\end{aligned}
\end{equation} 
(Due to the appearance of $P(0)$ in the boundary condition 
rather than $P_+$, this equation may not be self-adjoint, 
but that doesn't affect this argument.) If $\lambda$ 
is an eigenvalue of (\ref{sturm-cc}) that is not isolated 
from the rest of the spectrum, then it must be in the 
essential spectrum of (\ref{sturm-cc}), but by an argument 
essentially identical to the one given at the end of 
Section \ref{ode_section}, we see that the essential 
spectrum for (\ref{sturm-cc}) is confined to the 
interval $[\kappa, \infty)$, so there can be no interval 
of eigenvalues below $\kappa$.

{\it Left shelf.} For the left shelf, intersections between 
$\ell_1 (0;\lambda)$ and $\ell_2 (x; \lambda)$ at some 
value $x = s$ will correspond with one or more non-trivial 
solutions to the truncated boundary value problem 
\begin{equation} \label{sturm-s}
\begin{aligned}
\mathcal{L}_s \phi := 
Q(x)^{-1} \Big(-(P(x) \phi')' + V(x) \phi\Big) &= \lambda \phi, \quad x \in (s, \infty) \\
\alpha_1 \phi(s) + \alpha_2 P(s) \phi'(s) &= 0,
\end{aligned}
\end{equation} 
where 
\begin{equation*}
\begin{aligned}
\phi (\cdot; \lambda) \in \mathcal{D} (\mathcal{L}_s) 
&:= \{\phi \in L^2 ((s,\infty), \mathbb{C}^n): \phi, \phi' \in AC_{\loc} ([s,\infty), \mathbb{C}^n), \\
& \mathcal{L} \phi \in L^2 ((s,\infty), \mathbb{C}^n), \, \alpha_1 \phi(s) + \alpha_2 P(s) \phi'(s) = 0\}.
\end{aligned}
\end{equation*}
For this calculation, it's useful to use the projector formulation 
of our boundary conditions, developed in \cite{BK, Kuchment2004}
(see also \cite{HS} for an implentation of this formulation
in circumstances quite similar to those of the current
analysis). Briefly, there exist three orthogonal
(and mutually orthogonal) projection matrices 
$P_D$ (the Dirichlet projection), $P_N$ (the Neumann 
projection), and $P_R = I - P_D - P_N$ (the Robin
projection), and an invertible self-adjoint
operator $\Lambda$ acting on the space $P_R \mathbb{C}^n$
such that the boundary condition 
\begin{equation*}
\alpha_1 \phi (s) + \alpha_2 P(s) \phi '(s) = 0
\end{equation*} 
can be expressed as 
\begin{equation} \label{projectors}
\begin{aligned}
P_D \phi (s) &= 0 \\
P_N P(s) \phi'(s) &= 0 \\
P_R P(s) \phi'(s) &= P_R \Lambda P_R \phi(s).
\end{aligned}
\end{equation}
Moreover, $P_D$ can be constructed as the projection 
onto the kernel of $\alpha_2$ and $P_N$ can be 
constructed as the projection onto the kernel of 
$\alpha_1$. 

Suppose $\lambda$ is an eigenvalue for (\ref{sturm-s}), with 
corresponding eigenvector 
$\phi (\cdot; \lambda) \in \mathcal{D}(\mathcal{L}_s)$, 
and consider an 
$L^2 ((s, \infty), \mathbb{C}^n)$ inner product of $\phi (\cdot; \lambda)$ with (\ref{sturm-s}).
Integrating once by parts, we obtain (suppressing dependence on 
$\lambda$ for notational brevity)
\begin{equation} \label{energy-integral}
\int_s^{+\infty} (P \phi', \phi') dx - (P (s) \phi'(s), \phi (s))
+ \int_s^{+\infty} (V \phi, \phi) dx = \lambda \int_s^{+\infty} (Q \phi, \phi) dx.  
\end{equation}
Using uniform positivity of the matrices $P$ and $Q$, we can assert
that for the positive constants $\theta_P$ and $\theta_Q$ described 
in {\bf (A1)}, we have
\begin{equation*}
\begin{aligned}
\int_s^{+\infty} (P \phi', \phi') dx
&\ge \theta_P \|\phi'\|_{L^2 ((s,\infty), \mathbb{C}^n)}^2 \\
\int_s^{+\infty} (Q \phi, \phi) dx
&\ge \theta_Q \|\phi\|_{L^2 ((s,\infty), \mathbb{C}^n)}^2. 
\end{aligned}
\end{equation*}
In addition, with $C_V$ as described in {\bf (A1)}, we have
\begin{equation*}
\Big| \int_s^{+\infty} (V \phi, \phi) dx \Big|
\le C_V \|\phi\|_{L^2 ((s,\infty), \mathbb{C}^n)}^2.
\end{equation*}

For the boundary term, we can use our projection formulation 
to write 
\begin{equation*}
\begin{aligned}
(P (s) \phi'(s), \phi (s)) 
&= (P (s) \phi'(s), P_D \phi (s) + P_N \phi (s) + P_R \phi (s)) \\
&= (P(s) \phi' (s),  P_N \phi (s) + P_R \phi (s)) \\
&= (P_N P(s) \phi' (s), \phi (s)) + (P_R P(s) \phi' (s), \phi (s)) \\
&= (P_R \Lambda P_R \phi(s), \phi (s)).
\end{aligned}
\end{equation*} 
We have, then, 
\begin{equation*}
|(P (s) \phi'(s), \phi (s)) | = |(P_R \Lambda P_R \phi(s), \phi (s))|
\le C_b |\phi (s)|^2,
\end{equation*}
where $C_b$ depends only on the boundary matrices $\alpha_1$ and $\alpha_2$.
For $\phi (\cdot; \lambda) \in \mathcal{D} (\mathcal{L}_s)$, we can write 
\begin{equation*}
|\phi (s)|^2 = - \int_s^{\infty} \frac{d}{dx} |\phi (x)|^2 dx
= - \int_s^{\infty} (\phi'(x), \phi(x)) + (\phi (x), \phi'(x)) dx,
\end{equation*}
so that the Cauchy-Schwarz inequality leads to 
\begin{equation*}
\begin{aligned}
|\phi (s)|^2 &\le
\int_s^{\infty} 2 |\phi'(x)| |\phi (x)| dx 
\le \int_s^{\infty} \epsilon |\phi' (x)|^2 + \frac{1}{\epsilon} |\phi (x)|^2 dx \\
&= \epsilon \|\phi'\|_{L^2 ((s,\infty), \mathbb{C}^n)} 
+ \frac{1}{\epsilon} \|\phi\|_{L^2 ((s,\infty), \mathbb{C}^n)},
\end{aligned}
\end{equation*}
for any $\epsilon > 0$.

Combining these observations, we see that (\ref{energy-integral}) leads, for 
any $\lambda < 0$, to the inequality
\begin{equation*}
\begin{aligned}
\lambda \theta_Q \|\phi\|_{L^2 ((s,\infty), \mathbb{C}^n)}^2 &\ge \lambda \int_s^{+\infty} (Q \phi, \phi) dx
\ge \theta_P \|\phi'\|_{L^2 ((s,\infty), \mathbb{C}^n)}^2 - C_V \|\phi\|_{L^2 ((s,\infty), \mathbb{C}^n)}^2 \\
&- C_b \Big(\epsilon \|\phi'\|_{L^2 ((s,\infty), \mathbb{C}^n)} + \frac{1}{\epsilon} \|\phi\|_{L^2 ((s,\infty), \mathbb{C}^n)}\Big).
\end{aligned}
\end{equation*}
We choose $\epsilon$ so that $\theta_P - C_b \epsilon \ge 0$ to obtain the 
inequality
\begin{equation*}
\lambda \theta_Q \|\phi\|_{L^2 ((s,\infty), \mathbb{C}^n)}^2  
\ge - \Big(C_V + \frac{C_b}{\epsilon}\Big) \|\phi\|_{L^2 ((s,\infty), \mathbb{C}^n)}^2, 
\end{equation*}
from which we conclude the lower bound 
\begin{equation} \label{ev_lower-bound}
\lambda \ge - \Big( \frac{C_V}{\theta_Q} + \frac{C_b}{\epsilon \theta_Q} \Big).
\end{equation}

We see that we can choose $\lambda_{\infty}$ sufficiently large so that 
$\mathcal{L}_s$ has no eigenvalues $\lambda$ on the interval 
$(-\infty, -\lambda_{\infty})$ for any $s \in [0, \infty)$. Consequently, there can be no conjugate 
points $s \in [0, \infty)$ along a left shelf at $\lambda = -\lambda_{\infty}$.

\begin{remark} We contrast this observation with the case of 
Sturm-Liouville systems on $[0,1]$, for which conjugate points 
are possible on the left shelf. In the $[0,1]$-setting,
if the boundary conditions at either $0$ or $1$ are Dirichlet,
then there are no crossings along the left shelf 
(for $\lambda_{\infty}$ sufficiently large). The boundary 
condition $\phi \in L^2 ((0, \infty), \mathbb{C}^n)$ often has the same 
effect on unbounded domains as Dirichlet conditions have 
on bounded domains, and this is an example of that 
observation.  
\end{remark}

We note that this analysis leaves open the possibility that the 
asymptotic point at $+ \infty$ is conjugate. In the event that it is 
conjugate, $\lambda_{\infty}$ can be increased 
slightly to break the conjugacy. This is an immediate 
consequence of monotonicity along the top shelf, and serves
to establish Lemma \ref{boundary-inconjugate-lemma}. 

{\it Right shelf.} For the right shelf, we leave the Maslov 
index as a computation,
\begin{equation*}
\mas (\ell_1 (0; \lambda_0), \ell_2 (\cdot; \lambda_0); [0, \infty)).
\end{equation*}

Combining these observations, and using catenation of paths along with 
homotopy invariance, we find that the sum 
\begin{equation*}
\text{bottom shelf}
+ \text{right shelf}
+ \text{top shelf}
+ \text{left shelf} = 0,
\end{equation*}
respectively becomes
%\begin{equation*}
%\begin{aligned}
%\mas &(\ell_1 (0; \cdot), \ell_2 (0; \cdot); [-\lambda_{\infty}, \lambda_0])
%+ \mas (\ell_1 (0; \lambda_0), \ell_2 (\cdot; \lambda_0); [0, \infty)) \\
%& \quad \quad - \mas (\ell_1 (0; \cdot), \ell_2^+ (\cdot); [-\lambda_{\infty}, \lambda_0])
%- \mas (\ell_1 (0; - \lambda_{\infty}), \ell_2 (\cdot; -\lambda_{\infty}); [0, \infty)) = 0,
%\end{aligned}
%\end{equation*}
\begin{equation*}
- \mor (\mathcal{L}; \lambda_0) 
+ \mas (\ell_1 (0; \lambda_0), \ell_2 (\cdot; \lambda_0); [0, \infty))
- \mas (\ell_1 (0; \cdot), \ell_2^+ (\cdot); [-\lambda_{\infty}, \lambda_0])
-0 = 0,
\end{equation*}
and Theorem \ref{target0} is a rearrangement of this equality. \hfill $\square$

\section{Proof of Theorem \ref{target+}} \label{proof+}

We established in our proof of Theorem \ref{target0} that 
$\ell_2 (x; \lambda)$ is Lagrangian for all 
$(x, \lambda) \in [0, \infty) \times (-\infty, \kappa)$,
and we can proceed similarly to verify that the same is
true for $\ell_1 (x; \lambda)$. We omit the details.

As with our proof of Theorem \ref{target0}, we work with 
the Maslov box, but in this case, we place
the top shelf at $x = x_{\infty}$, for $x_{\infty}$
chosen sufficiently large during the analysis. We proceed in 
this way, because the Lagrangian subspace 
\begin{equation*}
\ell_1^+ (\lambda) := \lim_{x \to +\infty} \ell_1 (x; \lambda)
\end{equation*}
(which is well-defined for each $\lambda < \kappa$) is not generally
continuous as a function of $\lambda$. In particular, it is 
discontinuous at each eigenvalue of $\mathcal{L}$ (see \cite{HLS2}
for a discussion in the context of Schr\"odinger operators
on $\mathbb{R}$). 

We will use the Maslov index to detect intersections between 
our evolving Lagrangian subspace $\ell_1 (x; \lambda)$ and 
our target Lagrangian subspace $\ell_2 (x_{\infty}; \lambda)$.
Re-defining $\tilde{W}$ for this section, we now set 
\begin{equation} \label{tildeW+}
\begin{aligned}
\tilde{W} (x; \lambda) 
&= - (X_1 (x; \lambda) + i Y_1 (x; \lambda)) (X_1 (x; \lambda) - i Y_1 (x; \lambda))^{-1} \\
& \times (X_2 (x_{\infty}; \lambda) - i Y_2 (x_{\infty}; \lambda)) (X_2 (x_{\infty}; \lambda) + i Y_2 (x_{\infty}; \lambda))^{-1}.  
\end{aligned}
\end{equation}
For expositional convenience, we consider the sides of the 
Maslov box in the following order: left, top, bottom/right 
(together).  

{\it Left shelf.} In this case, conjugate points $x=s$ along the left
shelf correspond with values $s$ for which $\lambda = - \lambda_{\infty}$
is an eigenvalue for the ODE 
\begin{equation} \label{left-ode+}
\begin{aligned}
- (P(x) \phi')' + V(x) \phi &= \lambda Q(x) \phi; \quad \text{in } (0,s) \\
\alpha_1 \phi (0) &+ \alpha_2 P(0) \phi' (0) = 0 \\
Y_2 (x_{\infty}; \lambda)^* \phi (s) &- X_2 (x_{\infty}; \lambda)^* P(s) \phi' (s) = 0, 
\end{aligned}
\end{equation} 
where for notational brevity we are suppressing dependence 
of $\phi$ on $\lambda$.
By taking $x_{\infty}$ sufficiently large, we can make $X_2 (x_{\infty}; \lambda)$
as close as we like to the invertible matrix $R(\lambda)$, so that in this case
$X_2 (x_{\infty}; \lambda)$ is also invertible, and we can write, 
\begin{equation} \label{boundary-relation}
P(s) \phi' (s) = (X_2 (x_{\infty}; \lambda)^*)^{-1} Y_2 (x_{\infty}; \lambda)^* \phi (s). 
\end{equation}
Moreover, we have 
\begin{equation} \label{RD}
(X_2 (x_{\infty}; \lambda)^*)^{-1} Y_2 (x_{\infty}; \lambda)^*
\approx (R (\lambda)^*)^{-1} D (\lambda) R(\lambda)^* P_+ 
= P_+ R (\lambda) D (\lambda) R(\lambda)^* P_+,
\end{equation}
where the error on this approximation is $\mathbf{O} (e^{- \eta x_{\infty}})$ for 
some $\eta > 0$. The matrix 
\begin{equation*}
P_+ R (\lambda) D (\lambda) R(\lambda)^* P_+
\end{equation*} 
is self-adjoint, and since the entries of $D (\lambda)$ are the negative eigenvalues of 
$\mathbb{A}_+ (\lambda)$, it is negative definite. Also, the entries of 
$D(\lambda)$ approach $-\infty$ as $\lambda$ approaches $-\infty$, so the 
eigenvalues of $P_+ R(\lambda) D(\lambda) R(\lambda)^* P_+$ approach 
$-\infty$ as $\lambda$ approaches $-\infty$.

Let $\phi (x; \lambda)$ denote a solution to (\ref{left-ode+}). Upon taking an 
$L^2 ((0,s), \mathbb{C}^n)$ inner product of $\phi$ with (\ref{left-ode+}),
we obtain 
\begin{equation*}
- \int_0^s ((P(x) \phi')', \phi) dx + \int_0^s (V(x) \phi, \phi) dx 
= \lambda \int_0^s (Q(x) \phi, \phi) dx.
\end{equation*} 
For the first integral in this last expression, we compute 
\begin{equation*}
- \int_0^s ((P(x) \phi')', \phi) dx 
= \int_0^s (P(x) \phi', \phi') dx
- (P(s) \phi' (s), \phi (s)) 
+ (P(0) \phi' (0), \phi (0)).
\end{equation*}
Using (\ref{boundary-relation}), we see that 
\begin{equation*}
- (P(s) \phi' (s), \phi (s)) = 
- ((X_2 (x_{\infty}; \lambda)^*)^{-1} Y_2 (x_{\infty}; \lambda)^* \phi (s), \phi (s)). 
\end{equation*}

For the boundary term at $x = 0$, we proceed using the projectors 
$P_D$, $P_N$, and $P_R$ determined by $\alpha_1$ and $\alpha_2$
(as specified in (\ref{projectors})). Proceeding as in the 
proof of Theorem \ref{target0}, we find 
\begin{equation*}
(P(0) \phi' (0), \phi (0)) = (P_R \Lambda P_R \phi (0), \phi (0)). 
\end{equation*}
Combining these observations, we see that the boundary terms can 
be expressed as 
\begin{equation*}
\begin{aligned}
- &(P(s) \phi' (s), \phi (s)) 
+ (P(0) \phi' (0), \phi (0)) \\
&= 
- ((X_2 (x_{\infty}; \lambda)^*)^{-1} Y_2 (x_{\infty}; \lambda)^* \phi (s), \phi (s))
+ (P_R \Lambda P_R \phi (0), \phi (0)).
\end{aligned}
\end{equation*}
For $s$ sufficiently small, $\phi (s) = \phi (0) + \mathbf{O} (s)$, so that 
we approximately have 
\begin{equation} 
\Big(\Big((X_2 (x_{\infty}; \lambda)^*)^{-1} Y_2 (x_{\infty}; \lambda)^* - P_R \Lambda P_R \Big) \phi(0), \phi(0) \Big),
\end{equation}
which is positive for $x_{\infty}$ and $\lambda_{\infty}$ both chosen sufficiently 
large (by the discussion following (\ref{RD})). We conclude that there exists $s_0 > 0$ sufficiently small so that 
\begin{equation*}
- ((X_2 (x_{\infty}; \lambda)^*)^{-1} Y_2 (x_{\infty}; \lambda)^* \phi (s), \phi (s))
+ (P_R \Lambda P_R \phi (0), \phi (0)) \ge 0,
\end{equation*}
for all $0 < s \le s_0$.

Similarly as in the proof of Theorem \ref{target0}, we have
\begin{equation*}
\begin{aligned}
\int_0^{s} (P \phi', \phi') dx
&\ge \theta_P \|\phi'\|_{L^2 ((0, s), \mathbb{C}^n)}^2; \\
\int_0^{s} (Q \phi, \phi) dx
&\ge \theta_Q \|\phi\|_{L^2 ((0,s), \mathbb{C}^n)}^2; \\
\Big|\int_0^{s} (V \phi, \phi) dx \Big|
&\le C_V \|\phi\|_{L^2 ((0,s), \mathbb{C}^n)}^2.
\end{aligned}
\end{equation*}
For $\lambda < 0$, this allows us to write (still for $0 < s \le s_0$)
\begin{equation*}
\begin{aligned}
\lambda \theta_Q \|\phi\|_{L^2 ((0,s), \mathbb{C}^n)}^2 
& \ge \lambda \int_0^{s} (Q \phi, \phi) dx \\
&\ge \theta_P \|\phi'\|_{L^2 ((0, s), \mathbb{C}^n)}^2
-  C_V \|\phi\|_{L^2 ((0,s), \mathbb{C}^n)}^2,
\end{aligned}
\end{equation*}
from which we can immediately conclude 
\begin{equation*}
\lambda \ge -  \frac{C_V}{\theta_Q},
\end{equation*}
for all $0 < s \le s_0$. 

For $s > s_0$, we scale the independent variable by 
setting 
\begin{equation*}
\xi = \frac{x}{s}; \quad \varphi (\xi) = \phi (x).
\end{equation*}
Our system becomes  
\begin{equation} \label{left-ode+again}
\begin{aligned}
- (P(\xi s) \varphi')' + s^2 V(\xi s) \varphi &= s^2 \lambda Q(\xi s) \varphi; \quad \text{in } (0,1) \\
\alpha_1 \varphi (0) &+ \frac{1}{s} \alpha_2 P(0) \varphi' (0) = 0 \\
Y_2 (x_{\infty}; \lambda)^* \varphi (1) &- \frac{1}{s} X_2 (x_{\infty}; \lambda)^* P(s) \varphi' (1) = 0. 
\end{aligned}
\end{equation} 
Suppose $\varphi$ solves (\ref{left-ode+again}) for $\lambda = -\lambda_{\infty}$. Taking an 
inner product of $\varphi$ with (\ref{left-ode+again}), we get 
\begin{equation*}
- \int_0^1 ((P(\xi s)\varphi')', \varphi) d\xi + s^2 \int_0^1 (V(\xi s) \varphi, \varphi) d\xi
= s^2 \lambda \int_0^1 (Q(\xi s) \varphi, \varphi) d\xi. 
\end{equation*}
For the first integral, we have 
\begin{equation*}
- \int_0^1 ((P(\xi s)\varphi')', \varphi) d\xi
= \int_0^1 (P(\xi s)\varphi', \varphi') d\xi
- (P(s) \varphi' (1), \varphi (1)) + (P(0) \varphi' (0), \varphi (0)).
\end{equation*}

For the boundary term at $\xi = 1$, we have 
\begin{equation*}
- (P(s) \varphi' (1), \varphi (1))
= - s ((X_2 (x_{\infty}; \lambda)^*)^{-1} Y_2 (x_{\infty}; \lambda)^* \varphi (1), \varphi (1))
\ge 0,
\end{equation*}
where the inequality follows for $x_{\infty}$ sufficiently large from our 
prior discussion of 
\begin{equation*}
(X_2 (x_{\infty}; \lambda)^*)^{-1} Y_2 (x_{\infty}; \lambda)^*.
\end{equation*}  
For the boundary term at $\xi = 0$, we have
\begin{equation*}
(P(0) \varphi' (0), \varphi (0)) = s (P_R \Lambda P_R \varphi (0), \varphi (0)).
\end{equation*}
According to Lemma 1.3.8 in \cite{BK}, we can compute the 
upper bound 
\begin{equation*}
|(P_R \Lambda P_R \varphi (0), \varphi (0))| \le C_b |\varphi (0)|^2
\le C_b (\epsilon \|\varphi'\|_{L^2 ((0,1), \mathbb{C}^n)}^2 
+ \frac{2}{\epsilon} \|\varphi\|_{L^2 ((0,1), \mathbb{C}^n)}^2).
\end{equation*}
For $\lambda < 0$, this allows us to compute 
\begin{equation*}
\begin{aligned}
s^2 \lambda \theta_Q \|\varphi\|_{L^2 ((0,1), \mathbb{C}^n)}^2 
& \ge s^2 \lambda \int_0^1 (Q(\xi s) \varphi, \varphi) d \xi \\
&\ge \theta_P \|\varphi' \|_{L^2 ((0,1), \mathbb{C}^n)}^2
- s^2 C_V \|\varphi\|_{L^2 ((0,1), \mathbb{C}^n)}^2 \\
& \quad - s C_b (\epsilon \|\varphi'\|_{L^2 ((0,1), \mathbb{C}^n)}^2 
+ \frac{2}{\epsilon} \|\varphi\|_{L^2 ((0,1), \mathbb{C}^n)}^2). 
\end{aligned}
\end{equation*}
For each $s \in [s_0, x_{\infty}]$, we choose $\epsilon = \epsilon_s = \theta_P/(s C_b)$. This ensures
\begin{equation*}
\theta_P - s C_b \epsilon = 0, 
\end{equation*}
which leads immediately to 
\begin{equation*}
s^2 \lambda \theta_Q \|\varphi\|_{L^2 ((0,1), \mathbb{C}^n)}^2 
\ge
- s^2 C_V \|\varphi\|_{L^2 ((0,1), \mathbb{C}^n)}^2 \\
 - s^2 \frac{2C_b^2}{\theta_P} \|\varphi\|_{L^2 ((0,1), \mathbb{C}^n)}^2. 
\end{equation*}
We conclude a lower bound on $\lambda$, 
\begin{equation*}
\lambda \ge - \frac{C_V}{\theta_Q} - \frac{2C_b^2}{\theta_P \theta_Q}.
\end{equation*}

Combining these observations, we can conclude that for any value $\lambda_{\infty}$
chosen so that 
\begin{equation*}
-\lambda_{\infty} < - \frac{C_V}{\theta_Q} 
- \frac{2C_b^2}{\theta_P \theta_Q},
\end{equation*}
we will have no crossings along the left shelf. Similarly as in the proof
of Theorem \ref{target0}, this leaves open the possibility of a conjugate 
point at $(0, -\lambda_{\infty})$, corresponding with an intersection 
between $\ell_1 (0; -\lambda_{\infty})$ and $\ell_2 (x_{\infty}, -\lambda_{\infty})$.
Precisely as in the proof of Theorem \ref{target0}, we can increase $\lambda_{\infty}$
(if necessary) to ensure that 
$\ell_1 (0; -\lambda_{\infty}) \cap \ell_2^+ (-\lambda_{\infty}) = \{0\}$,
and then we can choose $x_{\infty}$ sufficiently large to ensure that this 
implies $\ell_1 (0; -\lambda_{\infty}) \cap \ell_2 (x_{\infty}; - \lambda_{\infty}) = \{0\}$.
For these choices of $x_{\infty}$ and $\lambda_{\infty}$, we have 
\begin{equation*}
\mas (\ell_1 (\cdot; -\lambda_{\infty}), \ell_2 (x_{\infty}; -\lambda_{\infty}); [0, x_{\infty}]) = 0.
\end{equation*}

{\it Top shelf.} In the case of Theorem \ref{target+}, $\tilde{W} (x; \lambda)$ has
been constructed so that conjugate points along the top shelf correspond 
precisely with eigenvalues of $\mathcal{L}$. In order to verify that the 
Maslov index along the top shelf corresponds with a count of eigenvalues, 
we need to check that the eigenvalues of $\tilde{W} (x; \lambda)$ rotate 
monotonically counterclockwise as $\lambda$ decreases. In this case, both 
$\mathbf{X_1}$ and $\mathbf{X}_2$ depend on $\lambda$, so according to 
Lemma 3.1 of \cite{HS2} (also Lemma 4.2 of \cite{HLS1}), rotation of the eigenvalues of 
$\tilde{W} (x; \lambda)$---for any $x \in [0, \infty)$---can be determined from the 
matrices $- \mathbf{X}_1 (x; \lambda)^* J \partial_{\lambda} \mathbf{X}_1 (x; \lambda)$
and $\mathbf{X}_2 (x_{\infty}; \lambda)^* J \partial_{\lambda} \mathbf{X}_2 (x_{\infty}; \lambda)$
in the following sense: If {\it both} of these matrices are non-positive, and 
at least one is negative definite at some point $(x_0, \lambda_0)$, then as $\lambda$ increases
through $\lambda_0$ (with $x = x_0$ fixed), all $n$ eigenvalues of 
$\tilde{W} (x; \lambda)$ will monotonically rotate in the 
clockwise direction. 

We have already established during the proof of Theorem \ref{target0} that 
the matrix 
\begin{equation*}
\mathbf{X}_2 (x_{\infty}; \lambda)^* J \partial_{\lambda} \mathbf{X}_2 (x_{\infty}; \lambda)
\end{equation*}
is negative definite, so we only need to check that 
$- \mathbf{X}_1 (x; \lambda)^* J \partial_{\lambda} \mathbf{X}_1 (x; \lambda)$
is non-positive. In fact, this latter matrix is negative definite as well, and since 
the proof is essentially identical to the proof for 
$\mathbf{X}_2 (x_{\infty}; \lambda)^* J \partial_{\lambda} \mathbf{X}_2 (x_{\infty}; \lambda)$,
we omit the details. 

We can conclude, similarly as for the bottom shelf in the proof 
of Theorem \ref{target0}, that 
\begin{equation*}
\mas (\ell_1 (x_{\infty}; \cdot), \ell_2 (x_{\infty}; \cdot); [-\lambda_{\infty}, \lambda_0]) 
= - \mor (\mathcal{L}; \lambda_0).
\end{equation*} 

{\it Bottom and right shelves.} We will need to compute Maslov 
indices along the bottom and right shelves, so it's natural to 
address the two of them together. Our approach is based 
substantially on the proofs of Claims 4.11 and 4.12 in 
\cite{HLS2}.

As a starting point, we introduce the new unitary matrix 
\begin{equation*}
\begin{aligned}
\tilde{\mathcal{W}} (x; \lambda) &:= 
- (X_1 (x; \lambda) + i Y_1 (x; \lambda)) (X_1 (x; \lambda) - i Y_1 (x; \lambda))^{-1} \\
& \times (R(\lambda) - i S (\lambda)) (R(\lambda) + i S(\lambda))^{-1},
\end{aligned}
\end{equation*}
which detects intersections between $\ell_1 (x; \lambda)$ and the 
asymptotic Lagrangian subspace 
\begin{equation*}
\ell_2^+ (\lambda) := \lim_{x \to +\infty} \ell_2 (x; \lambda).
\end{equation*}
Likewise, we specify the asymptotic matrix 
\begin{equation*}
\tilde{\mathcal{W}}^+ (\lambda) := \lim_{x \to \infty} \tilde{\mathcal{W}} (x; \lambda),
\end{equation*}
which is well-defined for each $\lambda < \kappa$, but not generally continuous
as a function of $\lambda$. (See the appendix in \cite{HLS2} for a discussion of
this discontinuity.) 
Since $R(\lambda)$ and $S(\lambda)$ can be written down explicitly, 
it is much more convenient to work with $\tilde{\mathcal{W}} (x; \lambda)$
than it is to work with $\tilde{W} (x; \lambda)$. In light of this, we
will show that our calculations can be carried out entirely in terms
of the former matrix. In particular, we have the following claim:

\begin{claim} Under the assumptions of Theorem \ref{target+}, 
we have the relation 
\begin{equation*}
\begin{aligned}
\mas &(\ell_1 (0; \cdot), \ell_2 (x_{\infty}, \cdot); [-\lambda_{\infty}, \lambda_0])
+ \mas (\ell_1 (\cdot; \lambda_0), \ell_2 (x_{\infty}; \lambda_0); [0, x_{\infty}]) \\
&= 
\mas (\ell_1 (0; \cdot), \ell_2^+ (\cdot); [-\lambda_{\infty}, \lambda_0])
+ \mas (\ell_1 (\cdot; \lambda_0), \ell_2^+ (\lambda_0); [0, \infty)).  
\end{aligned}
\end{equation*}
\end{claim}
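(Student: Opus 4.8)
The plan is to realize \emph{both} sides of the claim as the Maslov index of one pair of Lagrangian paths along the same ``bent'' curve in the $(x,\lambda)$--plane, and then to link the two sides by a homotopy that pushes the truncation point $x_\infty$ out to $+\infty$. Compactify the ray to $[0,\infty]$, and for $T\in[x_\infty,\infty]$ let $\gamma_T$ be the path that first runs up the segment $x=0$ with $\lambda:-\lambda_\infty\to\lambda_0$ and then runs right along $\lambda=\lambda_0$ with $x:0\to T$. Along $\gamma_T$ take as first Lagrangian $\ell_1$ evaluated at the current point, and as second Lagrangian $\ell_2(T;\cdot)$ evaluated at the current $\lambda$--coordinate (so on the right leg the second space is the constant $\ell_2(T;\lambda_0)$, and at $T=\infty$ it is $\ell_2^+(\cdot)$). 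By path additivity (P1), $\mas$ of this pair along $\gamma_{x_\infty}$ is exactly the left--hand side of the claim, while $\mas$ of this pair along $\gamma_\infty$ --- using $\ell_1(x;\lambda_0)\to\ell_1^+(\lambda_0)$ along the right leg, a limit attained continuously in $x$ since $\lambda_0$ is held fixed there --- is exactly the right--hand side.

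Next I would fix a homeomorphism $T:[0,1]\to[x_\infty,\infty]$ with $T(0)=x_\infty$, $T(1)=\infty$, and let $\mathcal{H}_s$ be the pair of paths along $\gamma_{T(s)}$ (with a common reparametrization onto $[0,1]_t$, rescaling the right leg onto $[0,T(s)]$). On the left leg the first Lagrangian is the $s$-- and $\lambda$--independent plane with frame $J\alpha^*$, and the second is $\ell_2(T(s);\lambda)$; on the right leg the first is $\ell_1(x;\lambda_0)$ with $x\in[0,T(s)]$ and the second is $\ell_2(T(s);\lambda_0)$. Joint continuity in $(s,t)$ up to $s=1$ follows from Lemma \ref{ode-lemma}: the rescaled frame $\mathbf{X}_2(x;\lambda)\mathcal{D}(x;\lambda)^{-1}$ has columns $\mathbf{r}_k(\lambda)+\mathbf{E}_k(x;\lambda)$ with $\mathbf{E}_k=\mathbf{O}(e^{-\tilde\eta x})$ uniformly in $\lambda\in(-\infty,\kappa)$, so $\ell_2(x;\lambda)\to\ell_2^+(\lambda)$ in the metric $d$ uniformly on $[-\lambda_\infty,\lambda_0]$, while $\ell_1(\cdot;\lambda_0)$ is continuous on the compactified ray. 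With $\mathcal{H}$ continuous on the closed square $[0,1]_t\times[0,1]_s$, its restriction to the boundary is a null--homotopic loop of Lagrangian pairs, so by the standard argument from (P1) and (P2) its Maslov index vanishes; reading this around the square gives
\begin{equation*}
\mas(\mathcal{H}_1)-\mas(\mathcal{H}_0)=\mas\big(\mathcal{H}_\cdot(\text{right end})\big)-\mas\big(\mathcal{H}_\cdot(\text{left end})\big).
\end{equation*}

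It then remains to show both edge corrections vanish. The correction at the left end of $\gamma$ is $\mas$ of the pair $\big(\ell_1(0;-\lambda_\infty),\,\ell_2(T(s);-\lambda_\infty)\big)$, $s\in[0,1]$. For finite $T$, an intersection here means exactly that $-\lambda_\infty$ is an eigenvalue of the truncated operator $\mathcal{L}_T$ on $(T,\infty)$ with the boundary condition at $x=T$, and the energy estimate (\ref{ev_lower-bound}) from the proof of Theorem \ref{target0} bounds all such eigenvalues below by a constant independent of $T$; for $T=\infty$ the relevant statement is $\ell_1(0;-\lambda_\infty)\cap\ell_2^+(-\lambda_\infty)=\{0\}$, i.e. that $\lambda_\infty$ is boundary inconjugate (Lemma \ref{boundary-inconjugate-lemma}). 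Hence, for $\lambda_\infty$ chosen large enough and boundary inconjugate, this pair has no conjugate points and the left correction is $0$. The correction at the right end of $\gamma$ is $\mas$ of the pair $\big(\ell_1(x;\lambda_0),\,\ell_2(x;\lambda_0)\big)$, $x\in[x_\infty,\infty]$; for finite $x$ an intersection would, by uniqueness for the ODE, produce a nonzero $L^2$ solution of $\mathcal{L}\phi=\lambda_0\phi$ satisfying the boundary condition at $x=0$, contradicting $\lambda_0\notin\sigma_{\operatorname{pt}}(\mathcal{L})$, and in the limit $x\to\infty$ one has $\ell_1^+(\lambda_0)=$ the unstable subspace of $\mathbb{A}_+(\lambda_0)$ (no nonzero Cauchy datum in $\ell_1(0;\lambda_0)$ generates an $L^2$ solution, so the evolved plane converges to the unstable plane), which is transverse to the stable subspace $\ell_2^+(\lambda_0)$. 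So the right correction is $0$ as well, and the displayed identity collapses to $\mas(\mathcal{H}_1)=\mas(\mathcal{H}_0)$, which is the claim. (If $\lambda_0\in\sigma_{\operatorname{pt}}(\mathcal{L})$ there is a single conjugate point at $x=+\infty$ on each side, handled identically by the endpoint conventions of Section \ref{maslov-section}, so it does not affect the comparison.) The step I expect to require the most care is the continuity of $\mathcal{H}$ at $s=1$: the compactification of the $x$--variable, the uniform-in-$\lambda$ convergence $\ell_2(x;\lambda)\to\ell_2^+(\lambda)$, and the continuity of $\ell_1(\cdot;\lambda_0)$ up to $x=+\infty$ all have to fit together, since without continuity on the closed square the homotopy argument collapses.
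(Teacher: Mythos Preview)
Your approach is correct and genuinely different from the paper's. The paper argues directly at the level of the unitary matrices $\tilde W$ and $\tilde{\mathcal W}$: since $\mathbf{X}_2(x_\infty;\lambda)=\mathbf{X}_2^+(\lambda)+\mathbf{O}(e^{-\tilde\eta x_\infty})$, the eigenvalues of the two unitaries can be made uniformly close on the compactified box, and hence the total spectral flow through $-1$ along the bottom--right path is the same for both (an eigenvalue of one cannot loop around $S^1$ without a companion eigenvalue of the other doing so). Your argument packages the same analytic input (the uniform-in-$\lambda$ convergence $\ell_2(x;\lambda)\to\ell_2^+(\lambda)$ and the existence of $\ell_1^+(\lambda_0)$) into a homotopy of Lagrangian pairs over the square $[0,1]_t\times[0,1]_s$, and reads off the identity from vanishing of the boundary Maslov index. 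This is arguably cleaner: it replaces the informal ``eigenvalues stay close, so the net count agrees'' step by a straightforward application of (P1)--(P2), and it makes the role of the two edge corrections explicit. The paper's route, on the other hand, avoids having to set up and check continuity of a reparametrized homotopy at $s=1$.

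One point to tighten: your parenthetical for the case $\lambda_0\in\sigma_{\mathrm{pt}}(\mathcal L)$ is not quite right. If $\lambda_0$ is an eigenvalue, then $\ell_1(x;\lambda_0)\cap\ell_2(x;\lambda_0)\neq\{0\}$ for \emph{every} $x\in[0,\infty]$, not just at $x=+\infty$ (both frames solve the same first-order system, so an intersection at one $x$ propagates to all $x$ with the same dimension). The right-edge path $(\ell_1(T(s);\lambda_0),\ell_2(T(s);\lambda_0))$ therefore has a conjugate point at every $s$, of constant multiplicity. By the endpoint/arrival conventions in Section~\ref{maslov-section} (no increment when an eigenvalue of $\tilde W$ sits at $-1$ throughout), this still gives Maslov index $0$ along that edge, so your conclusion stands --- but the justification is ``constant intersection dimension $\Rightarrow$ zero index,'' not ``a single conjugate point at the endpoint.''
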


\begin{proof}
First, it's clear that we have the relation 
\begin{equation*}
\begin{aligned}
\tilde{\mathcal{W}} (x; \lambda) &= \tilde{W} (x; \lambda)
(X_2 (x_{\infty}; \lambda) + i Y_2 (x_{\infty}; \lambda)) (X_2 (x_{\infty}; \lambda) - i Y_2 (x_{\infty}; \lambda))^{-1} \\
&\quad \times (R(\lambda) - i S (\lambda)) (R(\lambda) + i S(\lambda))^{-1}.
\end{aligned}
\end{equation*}
Recalling from Lemma \ref{ode-lemma} that 
\begin{equation*}
\mathbf{X}_2 (x_{\infty}; \lambda) = {R(\lambda) \choose S(\lambda)} 
+ \mathbf{O} (e^{-\tilde{\eta} x_{\infty}}), 
\end{equation*} 
for some $\tilde{\eta} > 0$, we see that by choosing $x_{\infty}$ 
sufficiently large, we can ensure 
that the eigenvalues of $\tilde{\mathcal{W}} (x; \lambda)$ are as close
as we like to the eigenvalues of $\tilde{W} (x; \lambda)$ for all 
$(x,\lambda) \in [0, \infty) \times [-\lambda_{\infty}, \lambda_0]$.
(Here, exponential decay in $x$ allows us to compactify $[0,\infty)$
with the usual one-point compactification.) In particular, we can 
ensure that no eigenvalue of $\tilde{W} (x; \lambda_0)$
can complete a loop of $S^1$ unless a corresponding eigenvalue of 
$\tilde{\mathcal{W}} (x; \lambda_0)$ completes a loop of $S^1$, with
the converse holding as well. 

Following our discussion of the left shelf, we have chosen $\lambda_{\infty}$
so that 
\begin{equation*}
\ell_1 (0; -\lambda_{\infty}) \cap \ell_2^+ (-\lambda_{\infty}) = \{0\},
\end{equation*}
and $x_{\infty}$ sufficiently large to ensure that this 
implies 
\begin{equation*}
\ell_1 (0; -\lambda_{\infty}) \cap \ell_2 (x_{\infty}; - \lambda_{\infty}) 
= \{0\}.
\end{equation*} 
With these choices, we see that $\tilde{W} (0; - \lambda_{\infty})$ does not
have $-1$ as an eigenvalue, and also $\tilde{\mathcal{W}} (0; -\lambda_{\infty})$
does not have $-1$ as an eigenvalue.

{\it Case 1.} First, suppose $\lambda_0$ is not an eigenvalue for 
$\mathcal{L}$. Then $\tilde{W} (x_{\infty}; \lambda_0)$ does not
have $-1$ as an eigenvalue, and also $\tilde{\mathcal{W}}^+ (\lambda_0)$
does not have $-1$ as an eigenvalue. By continuity, we can take 
$x_{\infty}$ large enough so that $\tilde{\mathcal{W}} (x_{\infty}; \lambda_0)$
does not have $-1$ as an eigenvalue, and additionally so that 
$\tilde{\mathcal{W}} (x; \lambda_0)$ does not have $-1$ as an 
eigenvalue for any $x \ge x_{\infty}$. Since the eigenvalues of 
$\tilde{W}$ and $\tilde{\mathcal{W}}$ remain uniformly close, 
the total spectral flow associated with the bottom 
and right shelves for $\tilde{W} (x; \lambda)$ must be 
the same as for $\tilde{\mathcal{W}} (x; \lambda)$. Specifically, 
we have 
\begin{equation*}
\begin{aligned}
\mas &(\ell_1 (0; \cdot), \ell_2 (x_{\infty}, \cdot); [-\lambda_{\infty}, \lambda_0])
+ \mas (\ell_1 (\cdot; \lambda_0), \ell_2 (x_{\infty}; \lambda_0); [0, x_{\infty}]) \\
&= 
\mas (\ell_1 (0; \cdot), \ell_2^+ (\cdot); [-\lambda_{\infty}, \lambda_0])
+ \mas (\ell_1 (\cdot; \lambda_0), \ell_2^+ (\lambda_0); [0, x_{\infty}]),
\end{aligned}
\end{equation*}
and the claim for Case 1 follows immediately from the specification 
that $x_{\infty}$ is taken large enough so that $\ell_1 (x; \lambda_0)$
and $\ell_2^+ (\lambda_0)$ do not intersect for $x \ge x_{\infty}$.

{\it Case 2.} Next, suppose $\lambda_0$ is an eigenvalue for $\mathcal{L}$. Then certainly
$\tilde{W} (x_{\infty}; \lambda_0)$ has $-1$ as an eigenvalue, and its 
multiplicity corresponds with the multiplicity of $\lambda_0$ as an eigenvalue
of $\mathcal{L}$. Likewise, $\tilde{\mathcal{W}}^+ (\lambda_0)$ will have 
$-1$ as an eigenvalue, and its multiplicity corresponds with the multiplicity 
of $\lambda_0$ as an eigenvalue of $\mathcal{L}$. As in the case when $\lambda_0$
is not an eigenvalue, we can choose $x_{\infty}$ large enough so that for 
$x \ge x_{\infty}$ the eigenvalues of $\tilde{\mathcal{W}} (x; \lambda)$ 
that do not approach $-1$ as $x \to +\infty$ remain bounded away from $-1$
as $x \to +\infty$. 

We now proceed precisely as in Case 1 for the eigenvalues of 
$\tilde{W} (x_{\infty}; \lambda_0)$ other than $-1$, and we note that 
an eigenvalue of $\tilde{W} (x; \lambda_0)$ will approach $-1$ as 
$x \to x_{\infty}$ if and only if an eigenvalue of 
$\tilde{\mathcal{W}} (x; \lambda)$ approaches $-1$ as $x \to +\infty$. 
Moreover, despite possible transient crossings, the net number of crossings
associated with these eigenvalues must coincide, because otherwise, an 
eigenvalue of $\tilde{W} (x; \lambda)$ would complete a full loop of $S^1$
without a corresponding eigenvalue of $\tilde{\mathcal{W}} (x; \lambda)$ 
also completing such a loop (or vice versa). 
\end{proof}

Combining now our observations for the four shelves, we find that the 
sum 
\begin{equation*}
\text{bottom shelf}
+ \text{right shelf}
+ \text{top shelf}
+ \text{left shelf} = 0,
\end{equation*}
respectively becomes
\begin{equation*}
\mas (\ell_1 (0; \cdot), \ell_2^+ (\cdot); [-\lambda_{\infty}, \lambda_0])
+ \mas (\ell_1 (\cdot; \lambda_0), \ell_2^+ (\lambda_0); [0, \infty)) 
+ \mor (\mathcal{L}; \lambda_0) + 0 
= 0,
\end{equation*}
and Theorem \ref{target+} is just a rearrangement of this equality. 
\hfill $\square$

\subsection{Changing the Target} \label{changing-target}

In this section, we verify that under certain conditions the target frame 
$\ell_2^+ (\lambda_0)$ in the calculation 
$\mas (\ell_1 (\cdot; \lambda_0), \ell_2^+ (\lambda_0); [0,\infty))$ 
can be replaced with the Dirichlet plane $\ell_D$. As noted 
earlier, one advantage of this replacement is that for a Dirichlet 
target the rotation of eigenvalues of $\tilde{W} (x; \lambda)$ as 
$x$ increases is monotonically clockwise. (This is straightforward 
to show, e.g., with the methods of \cite{HS2}.) 
The key observation we take advantage of here is that if 
$\lambda_0$ is not an eigenvalue of $\mathcal{L}$, then we 
explicitly know both $\ell_1 (0; \lambda_0)$ and 
\begin{equation*}
\ell_1^+ (\lambda_0) = \lim_{x \to +\infty} \ell_1 (x; \lambda_0)
= \tilde{\ell}_2^+ (\lambda_0),
\end{equation*}
where $\tilde{\ell}_2^+ (\lambda_0)$ denotes the Lagrangian 
subspace associated with solutions that
grow as $x$ tends to positive infinity. This allows us to 
compute both 
\begin{equation*}
\sgn Q (\ell_D, \ell_2^+ (\lambda_0); \ell_1 (0; \lambda_0))
\quad \text{and} \quad
\sgn Q (\ell_D, \ell_2^+ (\lambda_0); \ell_1^+ (\lambda_0)),
\end{equation*} 
and consequently we can compute the H\"ormander index
$s (\ell_D, \ell_2^+ (\lambda_0); \ell_1 (0; \lambda_0), \ell_1^+ (\lambda_0))$.

In order to apply our development from Section 
\ref{target-exchange}, we need the following five conditions to 
hold: (i) $\ell_D \cap \ell_1 (0;\lambda_0) = \{0\}$;
(ii) $\ell_2^+ (\lambda_0) \cap \ell_1 (0;\lambda_0) = \{0\}$;
(iii) $\ell_D \cap \ell_1^+ (\lambda_0) = \{0\}$;
(iv) $\ell_2^+ (\lambda_0) \cap \ell_1^+ (\lambda_0) = \{0\}$;
and $\ell_D \cap \ell_2^+ (\lambda_0) = \{0\}$. We will check 
below that Items (iii), (iv), and (v) hold under our general 
assumptions, and we will take Items (i) and (ii) to be additional
assumptions for this section.  

The first items to check are (iii) and (iv), which we can express as
the intersections $\ell_D \cap \tilde{\ell}_2^+ (\lambda_0)  = \{0\}$
and $\ell_2^+ (\lambda_0) \cap \tilde{\ell}_2^+ (\lambda_0) = \{0\}$. For these, we 
recall that our frame for $\tilde{\ell}_2^+ (\lambda_0)$ is 
\begin{equation*}
\tilde{\mathbf{X}}_2^+ (\lambda_0) = 
{R (\lambda_0) \choose - P R(\lambda_0) D(\lambda_0)}, 
\end{equation*}
where $R(\lambda_0)$ and $D (\lambda_0)$ are as in (\ref{R-defined}) and (\ref{D-defined}). For 
the Dirichlet plane, 
\begin{equation*}
(\tilde{\mathbf{X}}_2^+ (\lambda_0))^* J \mathbf{X}_D = (R(\lambda_0)^* \,\, - D(\lambda_0)^* P R(\lambda_0)^*) {-I \choose 0}
= R(\lambda_0)^*,
\end{equation*}
and since $R(\lambda_0)$ is invertible we have $\ker \Big((\tilde{\mathbf{X}}_2^+ (\lambda_0))^* J \mathbf{X}_D\Big) = \{0\}$.
Likewise, the frame for $\ell_2^+ (\lambda_0)$ is $\mathbf{X}_2^+ (\lambda_0) = {R (\lambda_0) \choose P R(\lambda_0) D(\lambda_0)}$,
so that 
\begin{equation*}
\begin{aligned}
(\tilde{\mathbf{X}}_2^+ (\lambda_0))^* J \mathbf{X}_2^+ (\lambda_0) &= (R(\lambda_0)^* \,\, - D(\lambda_0)^* R(\lambda_0)^* P) {-P R(\lambda_0) D(\lambda_0) \choose R(\lambda_0)} \\
&= - R(\lambda_0)^* P R(\lambda_0) D(\lambda_0) - D(\lambda_0)^* R(\lambda_0)^* P R(\lambda_0) \\
&= - 2 D(\lambda_0),
\end{aligned}
\end{equation*}
which is positive definite. The verification that $\ell_D \cap \ell_2^+ (\lambda_0) = \{0\}$ (i.e., Item (v) above) is 
essentially identical to the verification that $\ell_D \cap \tilde{\ell}_2^+ (\lambda_0) = \{0\}$, 
and we omit the details. 

By definition, the H\"ormander index for these Lagrangian subspaces
is 
\begin{equation*} 
s (\ell_D, \ell_2^+ (\lambda_0); \ell_1 (0; \lambda_0), \tilde{\ell}_2^+ (\lambda_0))
= \mas (\ell_1 (\cdot; \lambda_0), \ell_2^+ (\lambda_0); [0,\infty)) - \mas (\ell_1 (\cdot;\lambda_0), \ell_D; [0,\infty)).
\end{equation*}
According to H\"ormander's formula (\ref{hormander-formula}),
\begin{equation}
s (\ell_D, \ell_2^+ (\lambda_0); \ell_1 (0; \lambda_0), \tilde{\ell}_2^+ (\lambda_0)) =
\frac{1}{2} \Big(\sgn Q (\ell_D, \ell_2^+ (\lambda_0); \ell_1 (0; \lambda_0)) 
- \sgn Q (\ell_D, \ell_2^+ (\lambda_0); \tilde{\ell}_2^+ (\lambda_0))\Big).
\end{equation}
We can now use Lemma \ref{hormander-lemma} to compute the two quantities
$\sgn Q (\ell_D, \ell_2^+ (\lambda_0); \ell_1 (0; \lambda_0))$ and 
$\sgn Q (\ell_D, \ell_2^+ (\lambda_0); \tilde{\ell}_2^+ (\lambda_0))$. First, recalling
that the frame for $\ell_1 (0; \lambda_0)$ is 
$\mathbf{X}_1 (0;\lambda_0) = {-\alpha_2^* \choose \alpha_1^*}$,
and noting that the condition $\ell_1 (0; \lambda_0) \cap \ell_D = \{0\}$
implies that $\alpha_2$ is invertible,
we have (from Lemma \ref{hormander-lemma})
\begin{equation*}
\begin{aligned}
\sgn Q (\ell_D, \ell_2^+ (\lambda_0); \ell_1 (0; \lambda_0)) 
&= \sgn  (- \alpha_1^* (\alpha_2^*)^{-1} - Y_2^+ (X_2^+)^{-1}) \\
&= \sgn (- \alpha_1^* (\alpha_2^*)^{-1} - P_+ R(\lambda_0) D(\lambda_0) R(\lambda_0)^{-1}) \\
&= \sgn (- \alpha_1^* (\alpha_2^*)^{-1} - P_+ R(\lambda_0) D(\lambda_0) R(\lambda_0)^* P_+).
\end{aligned}
\end{equation*}
Likewise, 
\begin{equation*}
\begin{aligned}
\sgn Q (\ell_D, \ell_2^+ (\lambda_0); \tilde{\ell}_2^+ (\lambda_0)) 
&= \sgn  (- Y_2^+ (\lambda_0) (X_2^+ (\lambda_0))^{-1} - Y_2^+ (\lambda_0) (X_2^+ (\lambda_0))^{-1}) \\
&= \sgn (- 2 P_+ R(\lambda_0) D(\lambda_0) R(\lambda_0)^* P_+) \\
&= n,
\end{aligned}
\end{equation*}
because $P_+ R(\lambda_0) D(\lambda_0) R(\lambda_0)^* P_+$ is negative definite. 

Combining these observations, we see that 
\begin{equation*}
\begin{aligned}
\mas (\ell_1 (\cdot; \lambda_0), \ell_2^+ (\lambda_0); [0,\infty)) &= 
\mas (\ell_1 (\cdot; \lambda_0), \ell_D; [0,\infty)) \\
& + 
\frac{1}{2} \Big(-n + \sgn(- \alpha_1^* (\alpha_2^*)^{-1} - P_+ R(\lambda_0) D(\lambda_0) R(\lambda_0)^* P)\Big).
\end{aligned}
\end{equation*}

In this way, we obtain the following corollary to Theorem \ref{target+}. 

\begin{corollary} \label{corollary+}
Let the assumptions of Theorem \ref{target+} hold, and suppose additionally 
that $\lambda_0 \notin \sigma (\mathcal{L})$, $\ell_1 (0; \lambda_0) \cap \ell_D = \{0\}$,
and $\ell_1 (0; \lambda_0) \cap \ell_2^+ = \{0\}$. Then
\begin{equation*}
\begin{aligned}
\mor (\mathcal{L}; \lambda_0)
&= - \mas (\ell_1 (\cdot; \lambda_0), \ell_D; [0, \infty)) \\
& - \frac{1}{2} \Big(-n + \sgn(- \alpha_1^* (\alpha_2^*)^{-1} - P_+ R(\lambda_0) D(\lambda_0) R(\lambda_0)^* P)\Big) \\
& - \mas (\ell_1 (0; \cdot), \ell_2^+ (\cdot); [-\lambda_{\infty}, \lambda_0]).
\end{aligned}
\end{equation*} 
\end{corollary}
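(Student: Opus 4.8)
The plan is to read off the desired formula from Theorem~\ref{target+} by converting the right-shelf Maslov index, which carries the target $\ell_2^+(\lambda_0)$, into one with the Dirichlet target $\ell_D$ plus an explicit correction supplied by a H\"ormander-index computation. Theorem~\ref{target+} already gives
\[
\mor(\mathcal{L};\lambda_0) = -\mas(\ell_1(\cdot;\lambda_0),\ell_2^+(\lambda_0);[0,\infty)) - \mas(\ell_1(0;\cdot),\ell_2^+(\cdot);[-\lambda_\infty,\lambda_0]),
\]
so the only work is to rewrite the first term on the right. First I would compactify the right shelf, replacing $[0,\infty)$ by $[0,\infty]$: for fixed $\lambda_0<\kappa$ the path $x\mapsto\ell_1(x;\lambda_0)$ extends continuously to $x=+\infty$ with limit $\ell_1^+(\lambda_0)$, and since $\lambda_0\notin\sigma(\mathcal{L})$ no solution satisfying the boundary condition at $x=0$ is purely decaying, so $\ell_1^+(\lambda_0)$ coincides with the growing subspace $\tilde\ell_2^+(\lambda_0)$, whose frame $\tilde{\mathbf X}_2^+(\lambda_0)={R(\lambda_0)\choose -P_+R(\lambda_0)D(\lambda_0)}$ is explicit. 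Because this limiting space intersects neither $\ell_D$ nor $\ell_2^+(\lambda_0)$ trivially (checked below), the Maslov indices on $[0,\infty)$ and $[0,\infty]$ agree.

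Next I would verify the five hypotheses needed to run the target-exchange machinery of Section~\ref{target-exchange} for the triple $(\ell_D,\ell_2^+(\lambda_0))$ with comparison path $\ell_1(\cdot;\lambda_0)$. The conditions $\ell_D\cap\tilde\ell_2^+(\lambda_0)=\{0\}$, $\ell_2^+(\lambda_0)\cap\tilde\ell_2^+(\lambda_0)=\{0\}$, and $\ell_D\cap\ell_2^+(\lambda_0)=\{0\}$ follow from direct computation of $\tilde{\mathbf X}_2^{+\,*}J\mathbf X_D=R(\lambda_0)^*$ and $\tilde{\mathbf X}_2^{+\,*}J\mathbf X_2^+=-2D(\lambda_0)$, using invertibility of $R(\lambda_0)$ and definiteness of $D(\lambda_0)$; the remaining two, $\ell_1(0;\lambda_0)\cap\ell_D=\{0\}$ and $\ell_1(0;\lambda_0)\cap\ell_2^+(\lambda_0)=\{0\}$, are precisely the extra hypotheses of the corollary, and the first of them forces $\alpha_2$ to be invertible.

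With these in hand, the definition~(\ref{hormander-index}) of the H\"ormander index together with H\"ormander's formula~(\ref{hormander-formula}) gives
\[
\mas(\ell_1(\cdot;\lambda_0),\ell_2^+(\lambda_0);[0,\infty)) - \mas(\ell_1(\cdot;\lambda_0),\ell_D;[0,\infty)) = \tfrac12\big(\sgn Q(\ell_D,\ell_2^+(\lambda_0);\ell_1(0;\lambda_0)) - \sgn Q(\ell_D,\ell_2^+(\lambda_0);\tilde\ell_2^+(\lambda_0))\big).
\]
I would then evaluate the two signatures via Lemma~\ref{hormander-lemma}: the upper block $R(\lambda_0)$ of $\mathbf X_2^+(\lambda_0)$ is invertible, so the lemma applies, and using $\mathbf X_1(0;\lambda_0)={-\alpha_2^*\choose\alpha_1^*}$ (with $\alpha_2$ invertible) together with the explicit frames $\mathbf X_2^+$ and $\tilde{\mathbf X}_2^+$ one obtains $\sgn Q(\ell_D,\ell_2^+;\ell_1(0;\lambda_0))=\sgn\big(-\alpha_1^*(\alpha_2^*)^{-1}-P_+R(\lambda_0)D(\lambda_0)R(\lambda_0)^*P_+\big)$ and $\sgn Q(\ell_D,\ell_2^+;\tilde\ell_2^+(\lambda_0))=\sgn(-2P_+R(\lambda_0)D(\lambda_0)R(\lambda_0)^*P_+)=n$, the latter because $P_+R(\lambda_0)D(\lambda_0)R(\lambda_0)^*P_+$ is negative definite. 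Substituting the resulting expression for $\mas(\ell_1(\cdot;\lambda_0),\ell_2^+(\lambda_0);[0,\infty))$ back into the identity from Theorem~\ref{target+} and rearranging yields the stated formula.

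The step I expect to be the main obstacle is not the signature arithmetic but the clean handling of the right shelf at infinity: one must be certain that $\ell_1^+(\lambda_0)$ is genuinely the growing subspace $\tilde\ell_2^+(\lambda_0)$—this is exactly where the hypothesis $\lambda_0\notin\sigma(\mathcal{L})$ is used—and that the Maslov index on the compactified interval $[0,\infty]$ coincides with the one on $[0,\infty)$, since the H\"ormander-index identity is stated for paths on a compact interval whose endpoints avoid both targets. Once this bookkeeping is secured, the remainder is the computation sketched above, and Corollary~\ref{corollary+} follows.
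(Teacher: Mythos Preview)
Your proposal is correct and follows essentially the same approach as the paper's own argument in Section~\ref{changing-target}: start from Theorem~\ref{target+}, identify $\ell_1^+(\lambda_0)$ with the growing subspace $\tilde\ell_2^+(\lambda_0)$ using $\lambda_0\notin\sigma(\mathcal{L})$, verify the five transversality conditions (three automatic, two assumed), then apply H\"ormander's formula~(\ref{hormander-formula}) together with Lemma~\ref{hormander-lemma} to compute both signatures and substitute back. You even flag the same technical point the paper relies on implicitly---that the compactified endpoint $\ell_1^+(\lambda_0)=\tilde\ell_2^+(\lambda_0)$ avoids both targets, so the H\"ormander-index identity applies cleanly.
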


\section{Application to Quantum Graphs} \label{application-graphs}

In this section, we apply our framework to an operator on the half-line
that arises through consideration of nonlinear Schr\"odinger 
equations on quantum graphs with $n$ infinite edges extending from 
a single vertex (i.e., on star graphs). Our direct motivation for considering this example 
is the recent analysis of Kairzhan and Pelinovsky (see \cite{KP2018}),
and we also note that Kostrykin and Schrader have shown how the 
symplectic framework fits well with such problems (see \cite{KS99}) and 
that Latushkin and Sukhtaiev have recently developed this framework 
in the case of quantum graphs with edges of finite length (see \cite{LS2018}).
Finally, we mention that our general approach to quantum graphs
is adapted from the reference \cite{BK}. 

\subsection{The Schr\"odinger Operator on Star Graphs}

We consider a star graph with $n$ edges, which can be visualized 
as a single point with $n$ distinct half-lines emerging from it.
We will associate with each edge of our graph the interval 
$[0, \infty)$, and our basic Hilbert space associated with 
the full graph will be 
\begin{equation*}
\mathcal{H} = \bigoplus_{j=1}^n L^2 ((0, \infty), \mathbb{C}).
\end{equation*}
We will view elements $\phi \in \mathcal{H}$ as vector 
functions $\phi = (\phi_1, \phi_2, \cdots, \phi_n)^t$, 
and we specify the linear operator $\mathcal{L}: \mathcal{H} \to \mathcal{H}$
by 
\begin{equation*}
(\mathcal{L} \phi)_j = - \phi_j'' + v(x) \phi_j,
\end{equation*} 
where $v \in C([0,\infty), \mathbb{C})$ is a scalar potential for 
which we will assume the limit 
\begin{equation*}
\lim_{x \to \infty} v(x) = v_+
\end{equation*} 
exists and satisfies the asymptotic relation
\begin{equation*}
\int_0^{\infty} x (v(x) - v_+) dx < \infty.
\end{equation*}
(This is slightly weaker than our Assumption {\bf (A2)}, but 
sufficient in the current setting (see \cite{HLS2}).)
We specify boundary conditions at the vertex as 
\begin{equation} \label{graph-bc}
\alpha_1 \phi (0) + \alpha_2 \phi'(0) = 0,
\end{equation}
with $\alpha_1$ and $\alpha_2$ satisfying the assumptions 
described in {\bf (A3)}. Under these assumptions, we take as 
our domain for $\mathcal{L}$, 
\begin{equation*}
\mathcal{D} (\mathcal{L}) = 
\{\phi \in \mathcal{H}: \phi, \phi' \in AC_{\loc}([0,\infty), \mathbb{C}^n), \,
\mathcal{L} \phi \in \mathcal{H} \}. 
\end{equation*}
 
With this notation in place, we can consider the eigenvalue 
problem $\mathcal{L} \phi = \lambda \phi$ with boundary 
conditions (\ref{graph-bc}). In order to place this system 
in the framework of our analysis, we set 
$y(x; \lambda) = {y_1 (x; \lambda) \choose y_2 (x; \lambda)}$,
with $y_1 (x; \lambda) = \phi (x; \lambda)$ and 
$y_2 (x; \lambda) = \phi' (x; \lambda)$. In this way, we arrive
at our standard Hamiltonian system 
\begin{equation} \label{graph-hammy}
\begin{aligned}
Jy' &= \mathbb{B} (x; \lambda) y \\
\alpha y(0) &= 0,
\end{aligned}
\end{equation}
where $\mathbb{B} (x; \lambda)$ denotes the diagonal matrix
\begin{equation*}
\mathbb{B} (x; \lambda) = 
\begin{pmatrix}
(\lambda - v(x))I & 0 \\
0 & I
\end{pmatrix}.
\end{equation*}

Under our assumptions on the scalar potential $v$,
it's well known that for each $\lambda < v_+$ the 
scalar equation 
\begin{equation} \label{scalar-equation}
- z'' + v(x) z = \lambda z
\end{equation}
has one non-trivial solution that decays as 
$x \to +\infty$ and one non-trivial solution 
that grows as $x \to +\infty$. (See, e.g., 
\cite{HLS2}.) If we denote by $\zeta (x; \lambda)$
the solution that decays as $x \to + \infty$,
then we can express our frame $\mathbf{X}_2 (x;\lambda)$
of solutions of (\ref{graph-hammy}) decaying as $x \to + \infty$ as 
\begin{equation*}
\mathbf{X}_2 (x; \lambda) = 
\begin{pmatrix}
\zeta (x; \lambda) I \\
\zeta' (x; \lambda) I
\end{pmatrix}.
\end{equation*}
We see that in this case, and in the context of Theorem \ref{target0}, 
\begin{equation*}
\tilde{W} (x; \lambda) = 
- (-\alpha_2^* + i \alpha_1^*) (- \alpha_2^* - i \alpha_1^*)^{-1}
\frac{\zeta (x; \lambda) - i \zeta' (x; \lambda)}{\zeta (x; \lambda) + i \zeta' (x; \lambda)}. 
\end{equation*}
(I.e., this is (\ref{tildeW0}) for the current case.)
In particular, if we denote the eigenvalues of 
$(-\alpha_2^* + i \alpha_1^*) (- \alpha_2^* - i \alpha_1^*)^{-1}$ 
by $\{a_j\}_{j=1}^n$, then the eigenvalues of $\tilde{W} (x; \lambda)$
will be 
\begin{equation*}
\Big{\{}
- \frac{\zeta (x; \lambda) - i \zeta' (x; \lambda)}{\zeta (x; \lambda) + i \zeta' (x; \lambda)} a_j
\Big{\}}_{j=1}^n.
\end{equation*}

\begin{remark} \label{neumann-bc} We distinguish the {\it Neumann} or 
{\it Neumann-Kirchhoff} boundary conditions as those specified by 
the relations 
\begin{equation*}
\begin{aligned}
\phi_1 (0) &= \phi_2 (0) = \dots = \phi_n (0) = 0 \\
\sum_{j=1}^n \phi_j (0) &= 0. 
\end{aligned}
\end{equation*}
(See p. 14 of \cite{BK} for a discussion of terminology.) These 
correspond with 
\begin{equation} \label{alpha1}
\alpha_1 = 
\begin{pmatrix}
1 & -1 & 0 & \cdots & 0 & 0 \\
0 & 1 & -1 & \cdots & 0 & 0\\
\vdots & \vdots & \vdots & \vdots & \vdots & \vdots \\
0 & 0 & 0 & \cdots & 1 & -1 \\
0 & 0 & 0 & \cdots & 0 & 0  
\end{pmatrix}
\end{equation}
and 
\begin{equation} \label{alpha2}
\alpha_2 = 
\begin{pmatrix}
0 & 0 & \cdots & 0 \\
\vdots & \vdots & \vdots & \vdots \\
0 & 0 & \cdots & 0 \\
1 & 1 & \cdots & 1 
\end{pmatrix}.
\end{equation}
In this case, the eigenvalues of 
$(-\alpha_2^* + i \alpha_1^*) (- \alpha_2^* - i \alpha_1^*)^{-1}$ 
are $-1$ and $+1$, with $+1$ simple and $-1$ occurring with 
multiplicity $n-1$. This fact is straightforward to verify 
directly, and is also an immediate consequence of 
Corollary 2.3 from \cite{KS99}. 
\end{remark}

\subsection{NLS on Star Graphs}

We now consider the nonlinear Schr\"odinger equation 
\begin{equation} \label{nls}
i u_t = - \Delta u - (p+1) |u|^{2p} u,
\end{equation}
where $p > 0$ and $u \in \mathcal{H}$ with $u_j$ taking the 
values of $u$ on edge $j$ of the graph. We interpret the
notation $\Delta u$ and $|u|^{2p} u$ in this setting as
\begin{equation*}
\begin{aligned}
\Delta u &= (u_1'', u_2'', \dots, u_n'')^t \\
|u|^{2p} u &= (|u_1|^{2p} u_1, |u_2|^{2p} u_2, \dots, |u_n|^{2p} u_n)^t. 
\end{aligned}
\end{equation*} 
Such equations are known to admit standing wave solutions 
\begin{equation*}
u (x, t) = e^{i \omega t} \tilde{u}_{\omega} (x),
\end{equation*}
for any $\omega > 0$. Upon direct substitution into 
(\ref{nls}), we see that 
\begin{equation*}
-\Delta \tilde{u}_{\omega} - (p+1) |\tilde{u}_{\omega}|^{2p} \tilde{u}_{\omega}
= - \omega \tilde{u}_{\omega}.
\end{equation*}
In \cite{KP2018}, the authors observe that by setting 
\begin{equation*}
z = \omega^{1/2} x; \quad
\tilde{u}_{\omega} (x) = \omega^{\frac{1}{2p}} \tilde{u} (z),
\end{equation*}
we arrive at 
\begin{equation} \label{omega1}
-\Delta \tilde{u} - (p+1) |\tilde{u}|^{2p} \tilde{u}
= - \tilde{u}.
\end{equation}
This scaling justifies restricting our attention to the case 
$\omega = 1$. It's straightforward to verify that for any $p > 0$ 
(\ref{omega1}) admits the explicit solution
\begin{equation*}
\tilde{u} (x) = 
s(x) \begin{pmatrix}
1 \\
1 \\
\vdots \\
1
\end{pmatrix}; 
\quad s(x) = \sech^{1/p} (px).  
\end{equation*} 

We linearize (\ref{nls}) about $e^{it} \tilde{u} (x)$, 
writing 
\begin{equation*}
u(x, t) = e^{it} \tilde{u} (x) + e^{it} (v(x, t) + i w (x, t)),
\end{equation*} 
where $v(x,t)$ and $w (x, t)$ are both real-valued functions. 
Dropping off higher order terms, we obtain the linear system
\begin{equation*}
\begin{aligned}
v_t &= L_- w \\
w_t &= - L_+ v,
\end{aligned}
\end{equation*}
where 
\begin{equation*}
\begin{aligned}
L_- &= - \Delta + 1 - (p+1) \tilde{u} (x)^{2p}  \\
L_+ &= -\Delta + 1 - (p+1) (2p+1) \tilde{u} (x)^{2p}.
\end{aligned}
\end{equation*}

Our framework can now be used in order to determine 
the Morse indices of $L_{\pm}$ with Neumann--Kirchhoff 
boundary conditions. We focus on the slightly more 
interesting case, $L_+$. (The Morse index of $L_-$
with Neumann--Kirchhoff boundary conditions is $0$.)
The eigenvalue problem for $L_+$ can be expressed 
as 
\begin{equation} \label{nls-evp}
\begin{aligned}
- \phi'' + (1 - (p+1) (2p+1) s(x)^{2p}) \phi &= \lambda \phi; \quad
x \in (0, \infty) \\
\alpha_1 \phi (0) + \alpha_2 \phi'(0) &=0, 
\end{aligned}
\end{equation}
with $\alpha_1$ and $\alpha_2$ as expressed in 
(\ref{alpha1}) and (\ref{alpha2}). 

For this calculation, we will use Theorem \ref{target0} with 
$\lambda_0 = 0$. We observe that by construction, 
\begin{equation*}
\phi (x) = 
s'(x) \begin{pmatrix}
1 \\
1 \\
\vdots \\
1
\end{pmatrix}; 
\quad s(x) = \sech^{1/p} (px),  
\end{equation*}  
solves (\ref{nls-evp}) for $\lambda = 0$ (just differentiate
(\ref{omega1}) to see this; here, $\phi$ is not expected to 
satisfy the boundary condition at $x = 0$). This allows us to 
express our frame for solutions
of (\ref{nls-evp}) that decay as $x \to +\infty$ as 
\begin{equation*}
\mathbf{X}_2 (x; \lambda)
= {s' (x) I \choose s''(x) I}. 
\end{equation*} 
We set $\mathbf{X}_1 (0; \lambda) = {- \alpha_2^* \choose \alpha_1^*}$,
so that 
\begin{equation*}
\tilde{W} (x; 0) = 
- (-\alpha_2^* + i \alpha_1^*) (- \alpha_2^* - i \alpha_1^*)^{-1}
\frac{s'(x) - i s''(x)}{s'(x) + i s'' (x)}.
\end{equation*}
According to Remark \ref{neumann-bc}, the eigenvalues of 
$\tilde{W} (x; 0)$ are 
\begin{equation*}
q(x) := (s'(x) - i s''(x))(s'(x) + i s'' (x))^{-1},
\end{equation*}
with multiplicity $n - 1$ and the negative of this with
multiplicity 1. (Here, the notation $q(x)$ has been introduced simply 
for expositional convenience). 

In \cite{HS2}, the authors have developed a straightforward 
approach toward determining the direction of rotation for the 
eigenvalues of $\tilde{W} (x; \lambda)$ as $x$ varies, but 
in the current setting this rotation can be determined directly
from the form of $s (x)$. We observe that
\begin{equation*}
\begin{aligned}
s'(x) &= - s(x) \tanh (px) \\
s''(x) &= s(x) \tanh^2 (px) - s(x) p \sech^2 (px). 
\end{aligned}
\end{equation*}
We can write 
\begin{equation*}
\frac{s'(x) - i s''(x)}{s'(x) + i s'' (x)} 
= \frac{s'(x)^2 - s''(x)^2 - 2i s'(x) s''(x)}{s'(x)^2 + s''(x)^2}, 
\end{equation*}
for which we focus on the real and imaginary parts of the numerator 
\begin{equation*}
\begin{aligned}
s'(x)^2 - s''(x)^2 &= 
s(x)^2 \Big(\tanh^2 (px) - (\tanh^2 (px) - p \sech^2 (px))^2 \Big) \\
-2 s'(x) s''(x) &= 2 s(x)^2 \tanh (px) \Big(\tanh^2 (px) - p \sech^2 (px)\Big).
\end{aligned}
\end{equation*}
We note that for any $x > 0$,
\begin{equation} \label{reim}
\begin{aligned}
\sgn \text{Re }q(x) &= \sgn \Big(\tanh^2 (px) - (\tanh^2 (px) - p \sech^2 (px))^2 \Big) \\
\sgn \text{Im }q(x) &= \sgn \Big(\tanh^2 (px) - p \sech^2 (px)\Big).
\end{aligned}
\end{equation}

We now consider the motion of $q(x)$
as $x$ runs from $0$ to $+\infty$. 
First, $s' (0) = 0$ and $s'' (0) = -p$, so 
\begin{equation*}
q(0) = -1.
\end{equation*}
This means that $-1$ is an eigenvalue of $\tilde{W} (0; 0)$
with multiplicity $n-1$, and $+1$ is an eigenvalue of 
$\tilde{W} (0; 0)$ with multiplicity $1$. 
As $x$ increases from 0, we see from (\ref{reim}) that the 
imaginary part of $q(x)$ is negative, so rotation is in the 
counterclockwise direction. Moreover, since $\tanh^2 (px)$
and $\sech^2 (px)$ are both monotonic in $x$ (for $x \ge 0$),
we see that the imaginary part of $q(x)$ remains negative
until $x$ arrives at the unique value $\bar{x}$ for which
\begin{equation*}
\tanh^2 (p \bar{x}) - p \sech^2 (p\bar{x}) = 0.
\end{equation*}
We see from (\ref{reim}) that $\sgn \text{Re }q(\bar{x}) > 0$,
so $q (\bar{x}) = +1$. For $x > \bar{x}$, the imaginary part
of $q(x)$ is positive, and by noting the asymptotic relations 
$s'(x) \sim - 2^{1/p} e^{-x}$, $s'' (x) \sim 2^{1/p} e^{-x}$,
we see that as $x \to + \infty$, $q(x)$ approaches $i$. In 
summary, we see that as $x$ runs from $0$ to $+\infty$, $q(x)$
rotates from $-1$ to $i$, leaving $-1$ in the counterclockwise 
direction and never crossing $-1$. Indeed, with a bit more work,
we can verify that the rotation is entirely counterclockwise, 
but we don't require that much information to draw our 
conclusions. 

Returning to the matrix $\tilde{W} (x; 0)$, we can conclude that 
$n-1$ eigenvalues trace out precisely the path described in 
the previous paragraph, and the final eigenvalue begins at 
$+1$ when $x=0$ and rotates in the counterclockwise direction, 
approaching $-i$ as $x \to +\infty$. We conclude that
\begin{equation*}
\mas (\ell_1 (0; 0), \ell_2 (\cdot; 0); [0, \infty)) = +1.
\end{equation*} 

Finally, in order to use Theorem \ref{target0}, we need to compute 
$\mas (\ell_1 (0; \cdot), \ell_2^+ (\cdot); [-\lambda_{\infty}, 0])$.
For this, we observe that if we set $y = {y_1 \choose y_2}$, 
with $y_1 = \phi$ and $y_2 = \phi'$, then (\ref{nls-evp})
can be expressed as $y' = \mathbb{A} (x; \lambda) y$,
with 
\begin{equation*}
\mathbb{A} (x; \lambda) 
= \begin{pmatrix}
0 & I_n \\
((1 - (p+1) (2p+1) s(x)^{2p}) - \lambda)I_n & 0 
\end{pmatrix}.
\end{equation*}
Since $s(x) \to 0$ as $x \to \infty$, we see that 
\begin{equation*}
\mathbb{A}_+ (\lambda) :=
\lim_{x \to \infty} \mathbb{A} (x; \lambda) 
= \begin{pmatrix}
0 & I_n \\
(1 - \lambda)I_n & 0 
\end{pmatrix}.
\end{equation*}
We can readily check that as a choice for the corresponding 
asymptotic frame $\mathbf{X}_2^+ (\lambda) = {R (\lambda) \choose S (\lambda)}$, 
we can take $\mathbf{X}_2^+ (\lambda) = {I_n \choose - \sqrt{1 - \lambda} I_n}$. 
Thus for the top shelf, we have 
\begin{equation*}
\tilde{W}^+ (\lambda) = - (-\alpha_2^* + i \alpha_1^*) (- \alpha_2^* - i \alpha_1^*)^{-1}
\frac{1+i\sqrt{1-\lambda}}{1-i\sqrt{1-\lambda}}.
\end{equation*}
We conclude from Remark \ref{neumann-bc} that the eigenvalues 
of $\tilde{W}^+ (\lambda)$ are 
$(1+i\sqrt{1-\lambda})(1-i\sqrt{1-\lambda})^{-1}$ with 
multiplicity $(n-1)$ and the negative of this with multiplicity
$1$. For $\lambda < 1$ the value of 
$(1+i\sqrt{1-\lambda})(1-i\sqrt{1-\lambda})^{-1}$
cannot be $\pm 1$, so there are no conjugate points along the 
top shelf. We conclude that in this case
\begin{equation*}
\mas (\ell_1 (0; \cdot), \ell_2^+ (\cdot); [-\lambda_{\infty}, 0]) = 0.
\end{equation*}

Applying Theorem \ref{target0}, we find that 
\begin{equation*}
\mor (L_+) = \mas (\ell_1 (0; 0), \ell_2 (\cdot; 0); [0, \infty)) = +1.
\end{equation*}

\begin{remark} For a more complete discussion of the instability 
of the half-soliton $e^{i \omega t} \tilde{u}_{\omega} (x)$ as a 
solution to (\ref{nls}), including a calculation of $\mor (L_+)$
by other means, we refer the reader to \cite{KP2018}.
\end{remark}

\bigskip
{\it Acknowledgments.} This work was initiated while P.H. was visiting
Miami University in March, 2018. The authors are grateful to the 
Department of Mathematics at Miami University for supporting this 
trip.

\end{document}